\newtheorem{theorem}{Theorem}
\newtheorem{lemma}{Lemma}
\newtheorem*{ip}{Inverse Problem}
\theoremstyle{definition}
\newtheorem{defn}{Definition}[section]
\newtheorem{rem}{Remark}[section]
\newtheorem{example}{Example}
\numberwithin{equation}{section}
 \numberwithin{equation}{section}
  \numberwithin{theorem}{section}
   \numberwithin{lemma}{section}
    \numberwithin{defn}{section}
      \numberwithin{rem}{section}
\title{\bf  Fourier method for identifying  electromagnetic sources with multi-frequency far-field data}
\author{
 Xianchao Wang \thanks{Department of Mathematics, Harbin Institute of Technology, Harbin, China. Email: {\tt xcwang90@gmail.com}},
\and Minghui Song\thanks{Department of Mathematics, Harbin Institute of Technology, Harbin, China. Email: {\tt songmh@hit.edu.cn}},
\and Yukun  Guo \thanks{Department of Mathematics,
 Harbin Institute of Technology, Harbin, China. Email: {\tt ykguo@hit.edu.cn}},
\and Hongjie Li \thanks{Department of Mathematics, Hong Kong Baptist University, Kowloon, Hong Kong SAR, China.  Email:{\tt hongjie$_{-}$li@yeah.net}},
\and Hongyu Liu \thanks{Department of Mathematics, Hong Kong Baptist University, Kowloon, Hong Kong SAR, China.  Email:  {\tt hongyuliu@hkbu.edu.hk}}
\and}
\date{} 
\begin{document}
\maketitle

\begin{abstract}

  We consider the inverse problem of determining an unknown vectorial source current distribution associated with the homogeneous Maxwell system. We propose a novel non-iterative reconstruction method for solving the aforementioned inverse problem from far-field measurements. The method is based on recovering the Fourier coefficients of the unknown source. A key ingredient of the method is to establish the relationship between the Fourier coefficients and the multi-frequency far-field data. Uniqueness and stability results are established for the proposed reconstruction method. Numerical experiments are presented to illustrate the effectiveness and efficiency of the method.

\medskip

\medskip

\noindent{\bf Keywords:}~~inverse source problem, Maxwell's system, Fourier expansion, multi-frequency, far-field

\noindent{\bf 2010 Mathematics Subject Classification:}~~35R30, 35P25, 78A46

\end{abstract}

\section{Introduction}\label{sect:1}

The inverse source problem is concerned with the reconstruction of an unknown/inaccess-ible active source from the measurement of the radiating field induced by the source. The inverse source problem arises in many important applications including acoustic tomography \cite{Anastasio, ClaKli, Klibanov2013, Liu2015}, medical imaging\cite{Ammari2002, Arridge1999,Fokas2004}  and detection of pollution for the environment\cite{Badia2002}. In this paper, we are mainly concerned with the inverse source problem for wave propagation in the time-harmonic regime. In the last decades, many theoretical and numerical studies have been done in dealing with the inverse source problem for wave scattering.
The uniqueness and stability results can be found in \cite{Bao2017, Isa2 }. Several numerical reconstruction methods have also been proposed and developed in the literature. For a fixed frequency, we refer the reader to \cite{Ammari2002,Badia2013,He1998}.
However, with only one single frequency, the inverse source problem lacks of stability and it leads to severe ill-posedness.
In order to  improve the resolution,  multi-frequency measurements should be employed in the reconstruction \cite{Bao2017,Eller2009, Valdivia2012}.

The goal of this paper is to develop a novel numerical scheme for reconstructing an electric current source associated with the time-harmonic Maxwell system. Due to the existence of non-radiating sources \cite{Bleistein1977, Marengo2004}, the vectorial current sources cannot be uniquely determined from surface measurements. Albanese and Monk \cite{1Mon} showed that surface currents and dipole sources have a unique solution, but it is not valid for volume currents. Valdivia\cite{Valdivia2012} showed that
the volume currents could be uniquely identified if the current density is divergence free. Following the spirit of our earlier work \cite{WangGuo17,Wang17} by three of the authors of using Fourier method for inverse acoustic source problem, we develop a Fourier method for the reconstruction of a volume current associated with the time-harmonic Maxwell system. The extension from the scalar Hemholtz equation to the vectorial Maxwell system involves much subtle and technical analysis. First, we establish the one-to-one correspondence between the Fourier coefficients and  the far-field data, so that the Fourier coefficients can be directly calculated. Second, the proposed method is stable and robust to measurement noise. This is rigorously verified by establishing the corresponding stability estimates. Finally, compared to near-field Fourier method, our method is easy to implement with cheaper computational costs.

The rest of the paper is organized as follows. Section 2 describes the mathematical setup of the inverse source problem of our study. The theoretical uniqueness and stability results of proposed Fourier method are given in Section 3 and Section 4, respectively. Section 5 presents several numerical examples to illustrate the effectiveness and efficiency of the proposed method.

\section{Problem formulation}

Consider the following time-harmonic Maxwell system in $\mathbb{R}^3$,
\begin{equation}\label{eq: Maxwell}
\left\{
\begin{aligned}
&\nabla\times \bm E-\mathrm{i}\omega \mu_0 \bm H=0, \\
&\nabla\times\bm H+\mathrm{i}\omega\varepsilon_0\bm E=\bm J,
\end{aligned}
\right.
\end{equation}
with the Silver-M\"uller radiation condition
\begin{equation*}
  \lim_{|\bm x| \rightarrow +\infty} |\bm x|\left(\sqrt{\mu_0}\bm H\times \hat{\bm x}-\sqrt{\varepsilon_0}\bm E\right)=0,
\end{equation*}
where $\hat{\bm x}=\bm x/ |\bm x|$ and $\bm x=(x_1, x_2, x_3)^{\top}\in \mathbb{R}^3$.
Throughout the rest of the paper, we use non-bold and bold fonts to signify scalar and vectorial quantities, respectively. In \eqref{eq: Maxwell}, $\bm E$ denotes the electric filed, $ \bm H $ denotes the magnetic filed, $ \bm J $ is an electric current density, $\omega$ denotes the frequency, $\varepsilon_0$ denotes the electric permittivity and $\mu_0$ denotes the  magnetic permeability of the isotropic homogeneous background medium.
By eliminating $\bm H$ or $\bm E$ in \eqref{eq: Maxwell}, we obtain
\begin{equation*}
  \nabla\times\nabla\times \bm E-k^2 \bm E=\mathrm{i}\omega \mu_0 \bm J,
\end{equation*}
and
\begin{equation*}
  \nabla\times\nabla\times \bm H -k^2 \bm H= \nabla\times\bm J,
\end{equation*}
where $ k:=\omega\sqrt{\mu_0\varepsilon_0} $.
With the help of the vectorial Green function \cite{Tai94}, the radiated field can be written as
\begin{equation}\label{eq:a1}
  \displaystyle \bm E(\bm x)= \mathrm{i}\omega \mu_0  \left(\bm I+\frac{1}{k^2} \nabla \nabla\cdot\right)\int_{\mathbb{R}^3} \Phi(\bm x,\bm y)\, \bm J(\bm y) \,\mathrm{d}\bm y,
\end{equation}
and
\begin{equation}\label{eq:a2}
  \displaystyle \bm H(\bm x)= \nabla\times \int_{\mathbb{R}^3} \Phi(\bm x,\bm y)\, \bm J(\bm y) \,\mathrm{d}\bm y,
\end{equation}
respectively, where $\bm I$ is the $3\times 3$ identity matrix and
\begin{equation*}
  \displaystyle\Phi(\bm x,\bm y)=\frac{\mathrm{e}^{\mathrm{i}k|\bm x-\bm y|}}{4\pi|\bm x-\bm y|}, \quad \bm x\neq\bm y,
\end{equation*}
 is the fundamental solution to the Helmholtz equation.
The radiating fields $\bm E, \bm H $ to the Maxwell system have the following asymptotic expansion \cite{Colton2013}
\begin{equation*}
\begin{aligned}
  \bm E(\bm x)=\frac{e^{\mathrm{i}k|\bm x|}}{|\bm x|} \left\{ \bm E_{\infty}(\hat{\bm x})+ \mathcal{O}\left(\frac{1}{|\bm x|}\right)  \right\},\quad |\bm x|\rightarrow +\infty,\\
    \bm H(\bm x)=\frac{e^{\mathrm{i}k|\bm x|}}{|\bm x|} \left\{ \bm H_{\infty}(\hat{\bm x})+ \mathcal{O}\left(\frac{1}{|\bm x|}\right)  \right\},\quad |\bm x|\rightarrow +\infty,
    \end{aligned}
\end{equation*}
and by using the integral representations \eqref{eq:a1} and \eqref{eq:a2}, we have
\begin{align}
  &\label{eq:E_far}\bm E_{\infty}(\hat{\bm x})= \frac{\mathrm{i}\omega \mu_0}{4\pi}  \left(\bm I-\hat{\bm x} \hat{\bm x}^{\top}  \right)\int_{\mathbb{R}^3} e^{-\mathrm{i}k\hat{\bm x}\cdot \bm y} \, \bm J(\bm y) \,\mathrm{d}\bm y,\\
  &\label{eq:H_far}  \bm H_{\infty}(\hat{\bm x})=\frac{\mathrm{i}k}{4\pi} \hat{\bm x}\times\int_{\mathbb{R}^3} e^{-\mathrm{i}k\hat{\bm x}\cdot \bm y} \, \bm J(\bm y) \,\mathrm{d}\bm y.
\end{align}

In what follows, we always assume that  the electromagnetic source is a volume current that is supported in $D$.
As mentioned earlier, there exists non-radiating sources that produce no radiating field outside $D$. Hence, without any a prior knowledge, one can only recover the radiating part of the current density distribution. In order to formulate the uniqueness result, we assume that the current density distribution $\bm J$ only consists of radiating source, which is  independent of the wavenumber $k$ and of the form
\begin{equation*}
  \bm J\in \left(L^2(\mathbb{R}^3)\right)^3, \quad \mathrm{supp}\ \bm J\subset D,
\end{equation*}
where $D$ is a cube. Furthermore, the current density distribution $\bm J$ satisfies the transverse electric (TE) and  transverse magnetic (TM) decomposition; that is, the source can be expressed in the form
\begin{equation}\label{eq:density}
  \bm J=\bm p f+ \bm p\times\nabla g,
\end{equation}
where  $f\in L^2(D)$ and $ g\in H^1(D)$. We also refer to \cite{Lindell88} for more details on the TE/TM decomposition.
Here, $\bm p$ is the polarization direction which is assumed to be known and yields the following admissible set
\begin{equation}\label{eq:p}
  \mathbb{P}:=\left\{ \bm p \in \mathbb{S}^2 \mid \bm p\times \bm l \neq \bm 0, \quad  \forall \ \bm l\in \mathbb{Z}^3\backslash\{\bm 0\}    \right\}.
\end{equation}
From \eqref{eq:E_far} and \eqref{eq:H_far}, it is clear that
\begin{equation*}
\begin{aligned}
  &\bm E_{\infty}(-\hat{\bm x})=-\overline{\bm E_{\infty}(\hat{\bm x})},
  &\bm H_{\infty}(-\hat{\bm x})=\overline{\bm H_{\infty}(\hat{\bm x})},
  \end{aligned}
\end{equation*}
where and also in what follows, the overbar stands for the complex conjugate in this paper. Therefore, for our inverse problem, the measurements of the far-field data could be from an upper hemisphere $\mathbb{S}_{+}^2$,  say $x_3\geq0$. Figure \ref{fig:geometry} provides a schematic illustration of the geometric setting of the measurements. With the above discussion, the inverse source problem of the current study can be stated as follows,
\begin{ip}
 {\rm Given a fixed polarization direction $\bm p\in \mathbb{R}^3$ and a  finite number of wavenumbers $\{k\}$, we intend to recover the electromagnetic source $\bm J$ defined in \eqref{eq:density} from the electric far-field data  $\{\bm E_\infty(\hat{\bm x}_k;k,\bm p)\}$ or the magnetic far-field data $\{\bm H_\infty(\hat{\bm x}_k;k,\bm p)\}$, where $\hat{\bm x}_k$ depends on the wavenumber $k$ and $\hat{\bm x}_k\in \mathbb{S}_{+}^2$.}
\end{ip}

\begin{figure}
\centering
{\includegraphics[width=0.6\textwidth]
                   {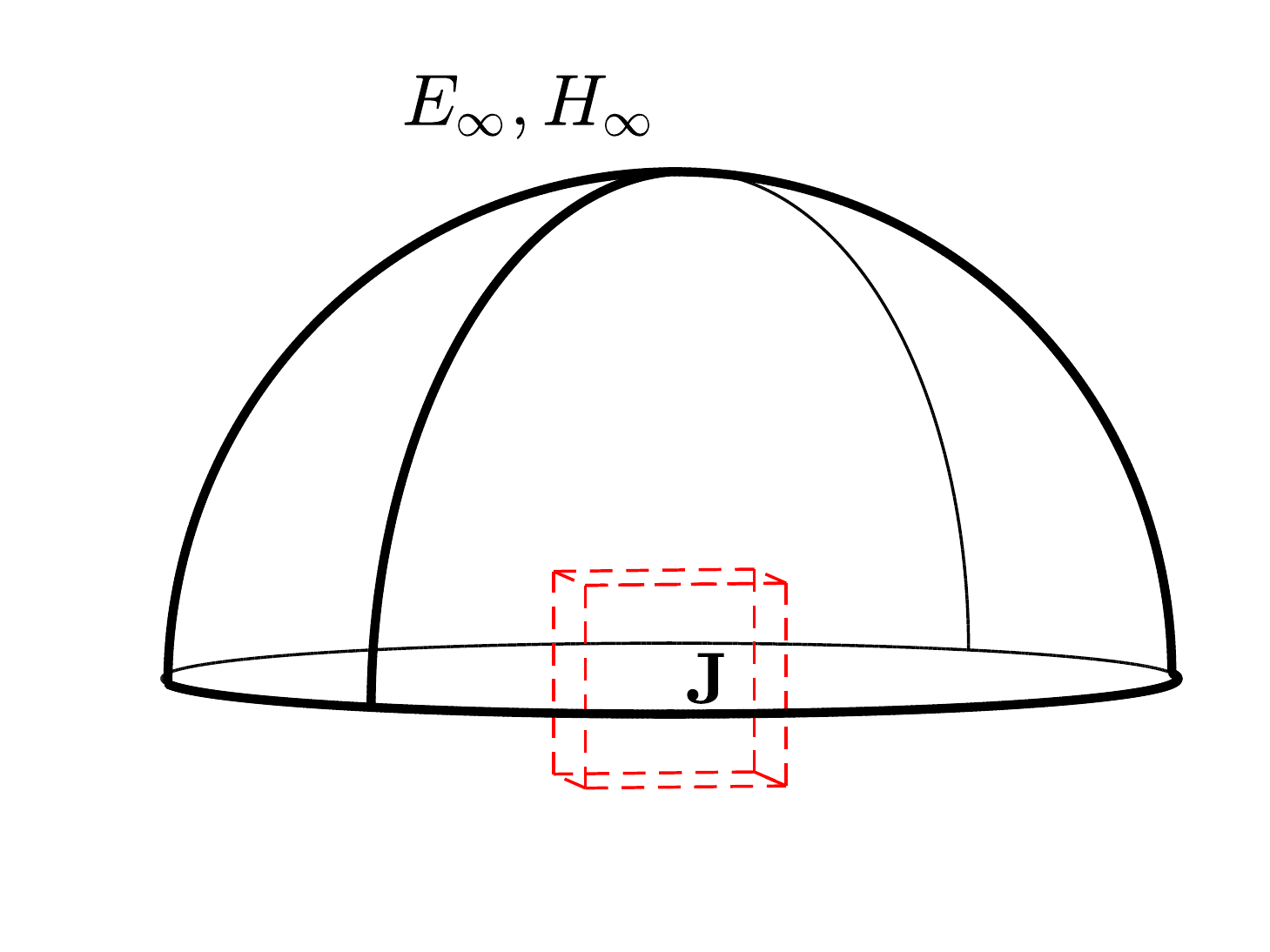}}
\caption{\label{fig:geometry} The schematic illustration of the inverse electromagnetic source problem by the far-field measurements with $x_3\geq0$.}
\end{figure}

\section{Uniqueness}

Prior to our discussion, we introduce some notations and relevant Sobolev spaces.
Without loss of generality, we let
\begin{equation*}
  D=\left(-\frac{a}{2},\ \frac{a}{2}\right)^3, \quad a \in \mathbb{R}_+.
\end{equation*}
Introduce the Fourier basis functions that are defined by
\begin{equation}\label{eq:Fourier_basic}
 \displaystyle \phi_{\bm l}(\bm x)=\exp\left(\mathrm{i}\frac{2\pi}{a} \bm l \cdot \bm x \right),\quad \bm l\in \mathbb{Z}^3, \ \bm x\in \mathbb{R}^3.
\end{equation}
By using the Fourier series expansion, the scalar functions $f\in L^2(D)$ and $g\in H^1(D)$ can be written as
\begin{equation*}
  f=\sum_{\bm l \in  \mathbb{Z}^3} \hat{f}_{\bm l}\, \phi_{\bm l}, \quad
  g=\sum_{\bm l \in  \mathbb{Z}^3\backslash \{\bm 0\}} \hat{g}_{\bm l}\, \phi_{\bm l},
\end{equation*}
where the Fourier coefficients are given by
\begin{align}
  &\label{eq:f_hat0}\hat{f}_{\bm l}=\frac{1}{a^3} \int_{D}f(\bm x)\overline{\phi_{\bm l}(\bm x)}\,\mathrm{d}\bm x,\\
  &\label{eq:g_hat0}\hat{g}_{\bm l}=\frac{1}{a^3} \int_{D}g(\bm x)\overline{\phi_{\bm l}(\bm x)}\,\mathrm{d}\bm x.
  \end{align}
Therefore the Fourier expansion of the current density $\bm J$ is
\begin{equation}\label{eq:J_expansion}
  \bm J=\bm p f+\bm p\times \nabla g=\bm p \sum_{\bm l \in  \mathbb{Z}^3} \hat{f}_{\bm l}\, \phi_{\bm l} + \frac{2\pi \mathrm{i}}{a} \sum_{\bm l \in  \mathbb{Z}^3\backslash \{\bm 0\}} (\bm p\times\bm l)\ \hat{g}_{\bm l}\, \phi_{\bm l}.
\end{equation}
The proposed reconstruction scheme in the current article is based on determining the Fourier coefficients $\hat{f}_{\bm l}$ and $\hat{g}_{\bm l}$ of the current density by using the corresponding electric or magnetic far-field data. For the subsequent use, we introduce the Sobolev spaces with $\sigma>0$
\begin{equation*}
  (H_{\bm p}^\sigma(D))^3:=\left\{ \bm p f+\bm p\times \nabla g \mid f \in H^\sigma(D), \, g\in H^{\sigma+1}(D), \, \bm p\in \mathbb{S}^2
    \right\},
\end{equation*}
equipped with the norm
\begin{equation*}
 \|\bm G\|_{\bm p , \sigma}=\left(\sum_{{\bm l}\in \mathbb{Z}^3}\left( 1+|{\bm l}|^2 \right)^{\sigma}|\hat{f}_{\bm l}|^2 + \frac{4\pi^2}{a^2}   \sum_{{\bm l}\in \mathbb{Z}^3\backslash \{\bm 0\}}\left( 1+|{\bm l}|^2 \right)^{\sigma}|\bm p\times \bm l|^2 |\hat{g}_{\bm l}|^2
\right)^{1/2}.
\end{equation*}
In addition, the wavenumber cannot be zero in \eqref{eq:E_far} and \eqref{eq:H_far}. Following \cite{WangGuo17}, we introduce the following definition of wavenumbers.

\begin{defn}[Admissible wavenumbers]\label{def:31}
Let  $\lambda$ be a sufficiently small positive constant and  the admissible wavenumbers can be defined by
\begin{equation}\label{eq:wavenumber}
  k_{\bm l}:=\left\{
\begin{aligned}
 & \frac{2\pi}{a}|\bm l|,    \quad \bm l \in \mathbb{Z}^3\backslash\{\bm 0\}, \\
 &  \frac{2\pi}{a}\lambda, \quad  \bm l =\bm 0.
\end{aligned}
\right.
\end{equation}
Correspondingly, the observation direction is given by
\begin{equation}\label{eq:x_hat}
  \hat{\bm x}_{\bm l}:=\left\{
\begin{aligned}
 & \hat{\bm l}, \quad   \quad \bm l \in \mathbb{Z}^3\backslash\{\bm 0\}, \\
 &  (1,0,0), \quad \bm l =\bm 0.
\end{aligned}
\right.
\end{equation}
\end{defn}
By virtue of Definition~\ref{def:31},  the Fourier basis functions defined in \eqref{eq:Fourier_basic} could be written as
\begin{equation*}
 \displaystyle \phi_{\bm l}(\bm x)= \exp\left(\mathrm{i}k_{\bm l}\, \hat{\bm l} \cdot \bm x \right),\quad \bm l\in \mathbb{Z}^3, \ \bm x\in \mathbb{R}^3.
\end{equation*}

Next we state the uniqueness result.

\begin{theorem}
Let $ k_{\bm l}$ and $ \hat{x}_{\bm l}$  be defined in \eqref{eq:wavenumber} and \eqref{eq:x_hat}, then the Fourier coefficients $\{\hat{f}_{\bm l}\}$  and $\{\hat{g}_{\bm l}\}$ in \eqref{eq:f_hat0} and \eqref{eq:g_hat0} could be uniquely  determined by $ \{\bm E_{\infty}(\hat{\bm x}_{\bm l};k_{\bm l},\bm p)\}$ or $ \{\bm H_{\infty}(\hat{\bm x}_{\bm l};k_{\bm l},\bm p)\}$,  where  $  \bm l \in \mathbb{Z}^3  $.
\end{theorem}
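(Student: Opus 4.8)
The plan is to exploit the fact that the admissible wavenumbers and observation directions of Definition~\ref{def:31} are chosen precisely so that each far-field measurement decouples into the Fourier data of $\bm J$ with a \emph{single} index, and then to use the polarization constraint $\bm p\in\mathbb{P}$ to separate the two scalar unknowns $\hat f_{\bm l}$ and $\hat g_{\bm l}$ attached to that index. I would first handle $\bm l\neq\bm 0$ and deal with the single index $\bm l=\bm 0$ afterwards by a subtraction argument; it suffices to argue for the electric far-field, since the magnetic one carries equivalent information.

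For $\bm l\in\mathbb{Z}^3\setminus\{\bm 0\}$, substitute $k=k_{\bm l}$ and $\hat{\bm x}=\hat{\bm x}_{\bm l}=\hat{\bm l}$ into \eqref{eq:E_far}. Because $k_{\bm l}\hat{\bm l}=\frac{2\pi}{a}\bm l$, the kernel $e^{-\ii k_{\bm l}\hat{\bm l}\cdot\bm y}$ equals $\overline{\phi_{\bm l}(\bm y)}$, so inserting \eqref{eq:J_expansion} and using the orthogonality $\int_D\phi_{\bm l'}\overline{\phi_{\bm l}}\,\mathrm{d}\bm y=a^3\delta_{\bm l,\bm l'}$ collapses the integral to $a^3\big(\bm p\,\hat f_{\bm l}+\frac{2\pi\ii}{a}(\bm p\times\bm l)\hat g_{\bm l}\big)$. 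Since $\bm p\times\bm l\perp\bm l$ gives $(\bm I-\hat{\bm l}\hat{\bm l}^{\top})(\bm p\times\bm l)=\bm p\times\bm l$, this yields
\[
\bm E_\infty(\hat{\bm x}_{\bm l};k_{\bm l},\bm p)=\frac{\ii\,\omega_{\bm l}\mu_0\,a^3}{4\pi}\left(\hat f_{\bm l}\,(\bm I-\hat{\bm l}\hat{\bm l}^{\top})\bm p+\frac{2\pi\ii}{a}(\bm p\times\bm l)\,\hat g_{\bm l}\right),\quad \omega_{\bm l}:=k_{\bm l}/\sqrt{\mu_0\varepsilon_0}.
\]
The two real vectors $(\bm I-\hat{\bm l}\hat{\bm l}^{\top})\bm p=\bm p-(\hat{\bm l}\cdot\bm p)\hat{\bm l}$ and $\bm p\times\bm l$ both lie in the plane orthogonal to $\hat{\bm l}$ and are mutually orthogonal; moreover $\bm p\cdot(\bm p\times\bm l)=0$ and $\bm p\cdot\big((\bm I-\hat{\bm l}\hat{\bm l}^{\top})\bm p\big)=1-(\hat{\bm l}\cdot\bm p)^2=|\bm p\times\hat{\bm l}|^2$. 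Hence, taking the bilinear dot product of the displayed identity with $\bm p$ isolates $\hat f_{\bm l}$, and taking it with $\bm p\times\bm l$ isolates $\hat g_{\bm l}$; the denominators $|\bm p\times\hat{\bm l}|^2$ and $|\bm p\times\bm l|^2$ are nonzero precisely because $\bm p\in\mathbb{P}$ forces $\bm p\times\bm l\neq\bm 0$. This produces closed-form formulas for all $\hat f_{\bm l}$, $\hat g_{\bm l}$ with $\bm l\neq\bm 0$.

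For the magnetic far-field \eqref{eq:H_far} I would argue in the same way, now using $\hat{\bm l}\times(\bm p\times\bm l)=|\bm l|(\bm I-\hat{\bm l}\hat{\bm l}^{\top})\bm p$ and the orthogonality and non-vanishing of $\hat{\bm l}\times\bm p$ and $(\bm I-\hat{\bm l}\hat{\bm l}^{\top})\bm p$; equivalently, one may note that $\bm H_\infty(\hat{\bm x};k,\bm p)=\sqrt{\varepsilon_0/\mu_0}\,\hat{\bm x}\times\bm E_\infty(\hat{\bm x};k,\bm p)$ and that $\bm v\mapsto\hat{\bm x}\times\bm v$ is invertible on the plane tangent to $\mathbb{S}^2$ at $\hat{\bm x}$, so the electric and magnetic cases are interchangeable.

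The main obstacle is the remaining index $\bm l=\bm 0$, for which $k_{\bm 0}\hat{\bm x}_{\bm 0}=\frac{2\pi\lambda}{a}(1,0,0)^{\top}$ is not a reciprocal-lattice vector, so the kernel in \eqref{eq:E_far} is not a Fourier basis function and the integral no longer diagonalizes. Here I would write $\int_D e^{-\ii k_{\bm 0}y_1}\bm J(\bm y)\,\mathrm{d}\bm y=c_{\bm 0}\hat f_{\bm 0}\bm p+\bm R$, where $c_{\bm 0}:=\int_D e^{-\ii k_{\bm 0}y_1}\,\mathrm{d}\bm y=\frac{a^3}{\pi\lambda}\sin(\pi\lambda)\neq0$ for $0<\lambda<1$, and $\bm R$ collects the contribution of all modes $\bm l'\neq\bm 0$ (note $g$ has no zeroth mode, so $\hat f_{\bm 0}$ is the only remaining unknown); by the previous step $\bm R$ is already known. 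Applying $\bm I-\hat{\bm x}_{\bm 0}\hat{\bm x}_{\bm 0}^{\top}$ annihilates the first component but leaves $(\bm I-\hat{\bm x}_{\bm 0}\hat{\bm x}_{\bm 0}^{\top})\bm p=(0,p_2,p_3)^{\top}$, which is nonzero because $(1,0,0)^{\top}\in\mathbb{Z}^3\setminus\{\bm 0\}$ and $\bm p\in\mathbb{P}$ imply $\bm p\times(1,0,0)^{\top}\neq\bm 0$; dotting $\bm E_\infty(\hat{\bm x}_{\bm 0};k_{\bm 0},\bm p)$ with $(0,p_2,p_3)^{\top}$ and using $p_2^2+p_3^2\neq0$ and $c_{\bm 0}\neq0$ then determines $\hat f_{\bm 0}$ uniquely. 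Completeness of $\{\phi_{\bm l}\}$ in $L^2(D)$ finally recovers $\bm J$ as well.
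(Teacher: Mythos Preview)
Your proposal is correct and follows essentially the same route as the paper: for $\bm l\neq\bm 0$ you substitute the admissible wavenumber/direction pair so the Fourier integral diagonalizes, then separate $\hat f_{\bm l}$ and $\hat g_{\bm l}$ by dotting with $\bm p$ and $\bm p\times\bm l$ (the paper does the same, just writing the projection via the triple cross product), and for $\bm l=\bm 0$ you use the identical subtraction argument. The only notable shortcut is that you dispose of the magnetic case by invoking $\bm H_\infty=\sqrt{\varepsilon_0/\mu_0}\,\hat{\bm x}\times\bm E_\infty$ and the invertibility of $\hat{\bm x}\times\cdot$ on the tangent plane, whereas the paper repeats the direct computation; this is a legitimate and slightly cleaner reduction.
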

\begin{proof}[\bf Proof.]
Let $\bm J$ be the electromagnetic source that produces the electric far-field  data $\{\bm E_{\infty}(\hat{\bm x}_{\bm l};k_{\bm l})\}_{\bm l \in \mathbb{Z}^3}$  and the magnetic far-field  data $\{\bm H_{\infty}(\hat{\bm x}_{\bm l};k_{\bm l})\}_{\bm l \in \mathbb{Z}^3}$ on $\mathbb{S}^2$.

First, we consider the recovery of $\bm J$ by the magnetic far-field  data.
For every $\bm l\in \mathbb{Z}^3\backslash\{\bm 0\}$, using \eqref{eq:H_far} and \eqref{eq:J_expansion},  we have
 \begin{equation}\label{eq:H_far1}
 \begin{aligned}
   &\bm H_{\infty}(\hat{\bm x}_{\bm l}; k_{\bm l})\\
   &=\frac{\mathrm{i}k_{\bm l}}{4\pi} \hat{\bm x}_{\bm l}
   \times \int_D \left(\bm p \hat{f}_{\bm 0} \mathrm{e}^{-\mathrm{i}k_{\bm l}\hat{\bm x}_{\bm l}\cdot \bm y}
   + \sum_{\tilde{\bm l} \in  \mathbb{Z}^3\backslash \{\bm 0\}}\left(\bm p \hat{f}_{\tilde{\bm l}}+ \frac{2\pi \mathrm{i}}{a} (\bm p\times\tilde{\bm l})\hat{g}_{\tilde{\bm l} }\right)\mathrm{e}^{\mathrm{i} (k_{\tilde{\bm l}}\hat{\tilde{\bm l}}-k_{\bm l}\hat{\bm x}_{\bm l})\cdot \bm y}\right) \mathrm{d}\bm y\\
   &=\frac{\mathrm{i}k_{\bm l} a^3}{4\pi} \left( \hat{\bm x}_{\bm l}
   \times \bm p \hat{f}_{\bm l} + \frac{2\pi \mathrm{i}}{a}  \hat{\bm x}_{\bm l}
   \times (\bm p\times \bm l)  \hat{g}_{\bm l} \right).
   \end{aligned}
 \end{equation}
 From \eqref{eq:p} and \eqref{eq:x_hat}, we see that $\{ \hat{\bm x}_{\bm l}, \bm p\times \hat{\bm x}_{\bm l}, \hat{\bm x}_{\bm l}\times (\bm p\times \hat{\bm x}_{\bm l}) \}$
forms an orthogonal basis of $\mathbb{R}^{3}$. Multiplying $ \hat{\bm x}_{\bm l}  \times \bm p$ on the both sides of \eqref{eq:H_far1}, and using the orthogonality, we obtain
   \begin{equation}\label{eq:Hf_hat}
     \hat{f}_{\bm l}=\frac{4\pi \hat{\bm x}_{\bm l}\times \bm p \cdot\bm H_{\infty}(\hat{\bm x}_{\bm l}; k_{\bm l})}{\mathrm{i}k_{\bm l} a^3
     |\hat{\bm x}_{\bm l} \times \bm p|^2}.
   \end{equation}
  Similarly, multiplying $ \hat{\bm x}_{\bm l}\times (\bm p\times \bm l)$ on the both sides of \eqref{eq:H_far1}, we have
  \begin{equation}\label{eq:Hg_hat}
     \hat{g}_{\bm l}=-\frac{2 \hat{\bm x}_{\bm l}\times (\bm p\times \bm l) \cdot\bm H_{\infty}(\hat{\bm x}_{\bm l}; k_{\bm l})}{k_{\bm l} a^2
     |\hat{\bm x}_{\bm l} \times (\bm p\times\bm l)|^2}.
   \end{equation}
 For  $\bm l=\bm 0$,  we have
  \begin{equation*}
 \begin{aligned}
   &\bm H_{\infty}(\hat{\bm x}_{\bm 0}; k_{\bm 0})\\
   &=\frac{\mathrm{i}k_{\bm 0}}{4\pi} \hat{\bm x}_{\bm 0}
   \times \int_D \left( \bm p \hat{f}_{\bm 0} \mathrm{e}^{-\mathrm{i}k_{\bm 0}\hat{\bm x}_{\bm 0}\cdot \bm y}+ \sum_{\bm l \in  \mathbb{Z}^3\backslash \{\bm 0\}}\left(\bm p \hat{f}_{\bm l}+ \frac{2\pi \mathrm{i}}{a} (\bm p\times\bm l)\hat{g}_{\bm l}\right)\mathrm{e}^{\mathrm{i}(k_{\bm l}\hat{\bm l}-k_{\bm 0}\hat{\bm x}_{\bm 0})\cdot \bm y} \right)\mathrm{d}\bm y.
   \end{aligned}
 \end{equation*}
 Multiplying $ \hat{\bm x}_{\bm 0}  \times \bm p$ on the both side of the last equation, and also using the orthogonal property, we obtain
 \begin{equation*}
 \begin{aligned}
   &\hat{\bm x}_{\bm 0}  \times \bm p \cdot \bm H_{\infty}(\hat{\bm x}_{\bm 0}; k_{\bm 0})\\
   &=\frac{\mathrm{i}k_{\bm 0}}{4\pi} |\hat{\bm x}_{\bm 0}
   \times \bm p|^2 \left( a^3\hat{f}_{\bm 0} \frac{\sin \lambda \pi}{\lambda \pi}+ \sum_{\bm l \in  \mathbb{Z}^3\backslash \{\bm 0\}}\hat{f}_{\bm l} \int_{D}
  \mathrm{e}^{\mathrm{i}(k_{\bm l}\hat{\bm l}-k_{\bm 0}\hat{\bm x}_{\bm 0})\cdot \bm y} \, \mathrm{d}\bm y  \right).
   \end{aligned}
 \end{equation*}
 Thus,
   \begin{equation}\label{eq:Hf0_hat}
     \hat{f}_{\bm 0}= \frac{\lambda \pi}{a^3 \sin \lambda \pi} \left( \frac{4\pi \hat{\bm x}_{\bm 0}\times \bm p \cdot\bm H_{\infty}(\hat{\bm x}_{\bm 0}; k_{\bm 0})}{\mathrm{i}k_{\bm 0}
     |\hat{\bm x}_{\bm 0} \times \bm p|^2}- \sum_{\bm l \in  \mathbb{Z}^3\backslash \{\bm 0\}}\hat{f}_{\bm l} \int_{D}
  \mathrm{e}^{\mathrm{i}(k_{\bm l}\hat{\bm l}-k_{\bm 0}\hat{\bm x}_{\bm 0})\cdot \bm y} \, \mathrm{d}\bm y  \right).
   \end{equation}

Next, we consider the recovery of $\bm J$ by the electric far-field  data.
For every $\bm l\in \mathbb{Z}^3\backslash\{\bm 0\}$, using \eqref{eq:E_far} and \eqref{eq:J_expansion},  we have
 \begin{equation}\label{eq:E_far1}
 \begin{aligned}
   \bm E_{\infty}(\hat{\bm x}_{\bm l}; k_{\bm l})
   =\frac{\mathrm{i}\omega \mu_0 a^3}{4\pi}\left( \bm I -\hat{\bm x}_{\bm l} \hat{\bm x}_{\bm l} ^{\top} \right) \left(  \bm p \hat{f}_{\bm l} + \frac{2\pi \mathrm{i}}{a}  (\bm p\times \bm l)  \hat{g}_{\bm l} \right).
   \end{aligned}
 \end{equation}
Through straightforward calculations, one can verify that
 \begin{equation*}
  \hat{\bm x}_{\bm l} \times(\hat{\bm x}_{\bm l} \times \bm A)=-\left( \bm I -\hat{\bm x}_{\bm l}  \hat{\bm x}_{\bm l} ^{\top} \right)\bm A,  \quad \forall\bm A\in \mathbb{R}^3.
 \end{equation*}
Combining the last two equations, one can show that
  \begin{equation}\label{eq:E_far2}
 \begin{aligned}
   &\bm E_{\infty}(\hat{\bm x}_{\bm l}; k_{\bm l})
   =\frac{\mathrm{i}\omega \mu_0 a^3}{4\pi} \left(  -\hat{\bm x}_{\bm l} \times(\hat{\bm x}_{\bm l} \times\bm p) \hat{f}_{\bm l} + \frac{2\pi \mathrm{i}}{a}  ( -\hat{\bm x}_{\bm l} \times(\hat{\bm x}_{\bm l} \times(\bm p \times \bm l))  \hat{g}_{\bm l} \right).
   \end{aligned}
 \end{equation}
 Multiplying $ \bm p $ on the both sides of \eqref{eq:E_far2}, and using the orthogonality, we obtain
 \begin{equation*}
 \begin{aligned}
   &\bm p\cdot \bm E_{\infty}(\hat{\bm x}_{\bm l}; k_{\bm l})\\
   &=\frac{\mathrm{i}\omega \mu_0 a^3}{4\pi}\left(  -\bm p\cdot\hat{\bm x}_{\bm l} \times(\hat{\bm x}_{\bm l} \times\bm p) \hat{f}_{\bm l} + \frac{2\pi \mathrm{i}}{a}  ( -\bm p\cdot\hat{\bm x}_{\bm l} \times(\hat{\bm x}_{\bm l} \times(\bm p \times \bm l))  \hat{g}_{\bm l} \right)\\
   &=\frac{\mathrm{i}\omega \mu_0 a^3}{4\pi}\left(  (\hat{\bm x}_{\bm l}\times \bm p) \cdot(\hat{\bm x}_{\bm l} \times\bm p) \hat{f}_{\bm l} + \frac{2\pi \mathrm{i}}{a}   (\hat{\bm x}_{\bm l}\times \bm p) \cdot(\hat{\bm x}_{\bm l} \times(\bm p \times \bm l))  \hat{g}_{\bm l} \right)\\
   &=\frac{\mathrm{i}\omega \mu_0 a^3}{4\pi}\left|  \hat{\bm x}_{\bm l}\times \bm p \right|^2 \hat{f}_{\bm l}.
   \end{aligned}
 \end{equation*}
 Thus,
   \begin{equation}\label{eq:Ef_hat}
     \hat{f}_{\bm l}=\frac{4\pi  \bm p \cdot\bm E_{\infty}(\hat{\bm x}_{\bm l}; k_{\bm l})}{\mathrm{i}\omega \mu_0 a^3
     |\hat{\bm x}_{\bm l} \times \bm p|^2}.
   \end{equation}
  Similarly, multiplying $ \bm p\times \bm l$ on the both sides of \eqref{eq:E_far2}, we obtain
  \begin{equation}\label{eq:Eg_hat}
     \hat{g}_{\bm l}=-\frac{2 (\bm p\times \bm l) \cdot\bm E_{\infty}(\hat{\bm x}_{\bm l}; k_{\bm l})}{\omega\mu_0 a^2
     |\hat{\bm x}_{\bm l} \times (\bm p\times\bm l)|^2}.
   \end{equation}
 For  $\bm l=\bm 0$,  we have
  \begin{equation*}
 \begin{aligned}
   &\bm E_{\infty}(\hat{\bm x}_{\bm 0}; k_{\bm 0})\\
   &=\frac{\mathrm{i}\omega \mu_0}{4\pi} \left( \bm I -\hat{\bm x}_{\bm 0}  \hat{\bm x}_{\bm 0} ^{\top} \right) \int_D \left( \bm p \hat{f}_{\bm 0} \mathrm{e}^{-\mathrm{i}k_{\bm 0}\hat{\bm x}_{\bm 0}\cdot \bm y}+ \sum_{\bm l \in  \mathbb{Z}^3\backslash \{\bm 0\}}\left(\bm p \hat{f}_{\bm l}+ \frac{2\pi \mathrm{i}}{a} (\bm p\times\bm l)\hat{g}_{\bm l}\right)\mathrm{e}^{\mathrm{i}(k_{\bm l}\hat{\bm l}-k_{\bm 0}\hat{\bm x}_{\bm 0})\cdot \bm y} \right)\mathrm{d}\bm y.
   \end{aligned}
 \end{equation*}
 Multiplying $ \bm p$ on the both sides of the last equation, and also using the orthogonality, we obtain
 \begin{equation*}
 \begin{aligned}
   & \bm p \cdot \bm E_{\infty}(\hat{\bm x}_{\bm 0}; k_{\bm 0})
   =\frac{\mathrm{i}\omega \mu_0}{4\pi} |\hat{\bm x}_{\bm 0}
   \times \bm p|^2 \left( a^3\hat{f}_{\bm 0} \frac{\sin \lambda \pi}{\lambda \pi}+ \sum_{\bm l \in  \mathbb{Z}^3\backslash \{\bm 0\}}\hat{f}_{\bm l} \int_{D}
  \mathrm{e}^{\mathrm{i}(k_{\bm l}\hat{\bm l}-k_{\bm 0}\hat{\bm x}_{\bm 0})\cdot \bm y} \, \mathrm{d}\bm y  \right).
   \end{aligned}
 \end{equation*}
 Thus,
   \begin{equation*}
     \hat{f}_{\bm 0}= \frac{\lambda \pi}{a^3 \sin \lambda \pi} \left( \frac{4\pi  \bm p \cdot\bm E_{\infty}(\hat{\bm x}_{\bm 0}; k_{\bm 0})}{\mathrm{i} \omega \mu_0
     |\hat{\bm x}_{\bm 0} \times \bm p|^2}- \sum_{\bm l \in  \mathbb{Z}^3\backslash \{\bm 0\}}\hat{f}_{\bm l} \int_{D}
  \mathrm{e}^{\mathrm{i}(k_{\bm l}\hat{\bm l}-k_{\bm 0}\hat{\bm x}_{\bm 0})\cdot \bm y} \, \mathrm{d}\bm y  \right).
   \end{equation*}

The proof is complete.
\end{proof}

In practical computations,  we have to truncate the infinite series by a finite order $N \in \mathbb{N}$ to approximate  $\bm J$ by
\begin{equation}\label{eq:J_N}
  \bm J_N=\bm p \hat{f}_{\bm 0}+ \sum_{1\leq|\bm l|_{\infty}\leq N} \left(\bm p \hat{f}_{\bm l} + \frac{2\pi \mathrm{i}}{a}(\bm p\times\bm l) \,\hat{g}_{\bm l}\right) \phi_{\bm l},
\end{equation}
where $\hat{f}_{\bm 0}$ could be represented by magnetic far-field
\begin{equation}\label{eq:Hf0_hat}
     \hat{f}_{\bm 0}\approx \frac{\lambda \pi}{a^3\sin \lambda \pi} \left( \frac{4\pi \hat{\bm x}_{\bm 0}\times \bm p \cdot\bm H_{\infty}(\hat{\bm x}_{\bm 0}; k_{\bm 0})}{\mathrm{i}k_{\bm 0}
     |\hat{\bm x}_{\bm 0} \times \bm p|^2}- \sum_{1\leq|\bm l|_{\infty}\leq N} \hat{f}_{\bm l} \int_{D}
  \mathrm{e}^{\mathrm{i}(k_{\bm l}\hat{\bm l}-k_{\bm 0}\hat{\bm x}_{\bm 0})\cdot \bm y} \, \mathrm{d}\bm y  \right),
\end{equation}
or electric far-field
   \begin{equation}\label{eq:Ef0_hat}
     \hat{f}_{\bm 0}\approx \frac{\lambda \pi}{a^3 \sin \lambda \pi} \left( \frac{4\pi  \bm p \cdot\bm E_{\infty}(\hat{\bm x}_{\bm 0}; k_{\bm 0})}{\mathrm{i} \omega \mu_0
     |\hat{\bm x}_{\bm 0} \times \bm p|^2}- \sum_{1\leq|\bm l|_{\infty}\leq N}\hat{f}_{\bm l} \int_{D}
  \mathrm{e}^{\mathrm{i}(k_{\bm l}\hat{\bm l}-k_{\bm 0}\hat{\bm x}_{\bm 0})\cdot \bm y} \, \mathrm{d}\bm y  \right).
   \end{equation}

\section{Stability}
In this section, we derive the stability estimates of recovering the Fourier coefficients of the electric current source by using the far-field data. We only consider the stability of using the magnetic far-field data, and the case with the electric far-field data can be treated in a similar manner. In what follows, we introduce $H_{\infty}^{\delta}(\hat{\bm x}_{\bm l}; k_{\bm l})$ such that
\begin{equation*}
  |\bm H_{\infty}^{\delta}(\hat{\bm x}_{\bm l}; k_{\bm l})-\bm H_{\infty}(\hat{\bm x}_{\bm l}; k_{\bm l})|\leq \delta |\bm H_{\infty}(\hat{\bm x}_{\bm l}; k_{\bm l})|,
\end{equation*}
where $\delta>0$. We first present two auxiliary results.

\begin{theorem}
 For $ \bm{l} \in \mathbb{Z}^{3},\, |\bm{l}|_{\infty} \leq N$, we have
\begin{align}
     \label{eq:f_cefficient}  &  |\hat{f}^{\delta}_{\bm{l}}- \hat{f}_{\bm{l}}| \leq C_1 \delta, \quad \quad  1\leq |\bm{l}|_{\infty} \leq N, \\
     \label{eq:g_cefficient}  &  |\hat{g}^{\delta}_{\bm{l}}- \hat{g}_{\bm{l}}| \leq C_2 \delta, \quad \quad  1\leq |\bm{l}|_{\infty} \leq N, \\
     \label{eq:f_cefficient0}  &   |\hat{f}^{\delta}_{\bm 0}- \hat{f}_{\bm 0}|  \leq  C_3\delta +  C_4 \lambda N \delta+ C_5\frac{\lambda}{\sqrt{N}},
\end{align}
where constants $C_1, C_2, C_3, C_4$  and $C_5$ depend on $ f, g , a $ and $\lambda$.
\end {theorem}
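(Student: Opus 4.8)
The plan is to work directly from the closed-form reconstruction formulas obtained in the uniqueness proof, since by definition $\hat f_{\bm l}^{\delta}$, $\hat g_{\bm l}^{\delta}$ and $\hat f_{\bm 0}^{\delta}$ are produced by inserting the perturbed data $\bm H_{\infty}^{\delta}$ into \eqref{eq:Hf_hat}, \eqref{eq:Hg_hat} and the truncated formula \eqref{eq:Hf0_hat}, respectively. Two a priori bounds will be used repeatedly: from \eqref{eq:H_far}, $|\bm H_{\infty}(\hat{\bm x}_{\bm l};k_{\bm l})|\le \tfrac{k_{\bm l}}{4\pi}\|\bm J\|_{L^{1}(D)}\le \tfrac{k_{\bm l}a^{3/2}}{4\pi}\|\bm J\|_{L^{2}(D)}$, and from \eqref{eq:f_hat0}--\eqref{eq:g_hat0} together with $f\in L^{2}(D)$, $g\in H^{1}(D)$, the estimates $|\hat f_{\bm l}|\le a^{-3/2}\|f\|_{L^{2}(D)}$ and, via Parseval, $\big(\sum_{\bm l}|\hat f_{\bm l}|^{2}\big)^{1/2}=a^{-3/2}\|f\|_{L^{2}(D)}$. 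Note also that $\bm p\in\mathbb P$ guarantees $|\hat{\bm x}_{\bm l}\times\bm p|>0$ for every $\bm l\in\mathbb Z^{3}\setminus\{\bm 0\}$ (and for $\bm l=\bm 0$, since $(1,0,0)\in\mathbb Z^{3}\setminus\{\bm 0\}$), so over the finite index set $|\bm l|_{\infty}\le N$ this quantity has a positive lower bound.

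For $1\le|\bm l|_{\infty}\le N$, subtracting \eqref{eq:Hf_hat} from its $\delta$-perturbed version gives
\[
\hat f_{\bm l}^{\delta}-\hat f_{\bm l}=\frac{4\pi\,(\hat{\bm x}_{\bm l}\times\bm p)\cdot\big(\bm H_{\infty}^{\delta}(\hat{\bm x}_{\bm l};k_{\bm l})-\bm H_{\infty}(\hat{\bm x}_{\bm l};k_{\bm l})\big)}{\mathrm{i}\,k_{\bm l}\,a^{3}\,|\hat{\bm x}_{\bm l}\times\bm p|^{2}},
\]
and the Cauchy--Schwarz inequality, the noise bound $|\bm H_{\infty}^{\delta}-\bm H_{\infty}|\le\delta|\bm H_{\infty}|$ and the first a priori bound let the factor $k_{\bm l}$ cancel, yielding $|\hat f_{\bm l}^{\delta}-\hat f_{\bm l}|\le \big(a^{-3/2}\|\bm J\|_{L^{2}(D)}/|\hat{\bm x}_{\bm l}\times\bm p|\big)\delta$, i.e.\ \eqref{eq:f_cefficient}. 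The same manipulation applied to \eqref{eq:Hg_hat}, now using the identity $|\hat{\bm x}_{\bm l}\times(\bm p\times\bm l)|=|\bm l|\,|\hat{\bm x}_{\bm l}\times\bm p|$ and the extra factor $|\bm l|\ge1$ in the denominator, gives \eqref{eq:g_cefficient}.

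The estimate \eqref{eq:f_cefficient0} is the crux. Writing $\hat f_{\bm 0}^{\delta}-\hat f_{\bm 0}$ from the exact formula for $\hat f_{\bm 0}$ (with the full sum over $\mathbb Z^{3}\setminus\{\bm 0\}$) and the truncated perturbed formula \eqref{eq:Hf0_hat}, and pulling out the bounded prefactor $\tfrac{\lambda\pi}{a^{3}\sin\lambda\pi}$, I would split the remainder into: (A) the data-noise term coming from $\bm H_{\infty}^{\delta}(\hat{\bm x}_{\bm 0};k_{\bm 0})-\bm H_{\infty}(\hat{\bm x}_{\bm 0};k_{\bm 0})$; (B) the propagated error $\sum_{1\le|\bm l|_{\infty}\le N}(\hat f_{\bm l}^{\delta}-\hat f_{\bm l})\int_{D}\mathrm e^{\mathrm i(k_{\bm l}\hat{\bm l}-k_{\bm 0}\hat{\bm x}_{\bm 0})\cdot\bm y}\,\mathrm d\bm y$; and (C) the truncation tail $\sum_{|\bm l|_{\infty}>N}\hat f_{\bm l}\int_{D}\mathrm e^{\mathrm i(k_{\bm l}\hat{\bm l}-k_{\bm 0}\hat{\bm x}_{\bm 0})\cdot\bm y}\,\mathrm d\bm y$. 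The decisive step — and the one I expect to matter most — is the evaluation of the integral: since $k_{\bm l}\hat{\bm l}-k_{\bm 0}\hat{\bm x}_{\bm 0}=\tfrac{2\pi}{a}(l_{1}-\lambda,\,l_{2},\,l_{3})$, separating variables shows that $\int_{D}\mathrm e^{\mathrm i(k_{\bm l}\hat{\bm l}-k_{\bm 0}\hat{\bm x}_{\bm 0})\cdot\bm y}\,\mathrm d\bm y=a^{2}\,\tfrac{a\sin\pi(l_{1}-\lambda)}{\pi(l_{1}-\lambda)}$ when $l_{2}=l_{3}=0$ and $0$ otherwise, and $\sin\pi(l_{1}-\lambda)=-(-1)^{l_{1}}\sin\pi\lambda$, so each surviving integral has modulus $\le \tfrac{a^{3}\sin\pi\lambda}{\pi|l_{1}-\lambda|}\le Ca^{3}\lambda/|l_{1}|$. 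Hence the $\bm l$-sums in (B) and (C) collapse to one-dimensional sums over $l_{1}$: in (B) there are at most $2N$ terms, each controlled by $C_{1}\delta$ from \eqref{eq:f_cefficient} times $\le C\lambda$, which produces the $C_{4}\lambda N\delta$ contribution; in (C), Cauchy--Schwarz against $\big(\sum_{|l_{1}|>N}|l_{1}-\lambda|^{-2}\big)^{1/2}\le C/\sqrt N$ and $\big(\sum_{l_{1}}|\hat f_{(l_{1},0,0)}|^{2}\big)^{1/2}\le a^{-3/2}\|f\|_{L^{2}(D)}$ produces the $C_{5}\lambda/\sqrt N$ contribution. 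For (A) I would use $|\bm H_{\infty}(\hat{\bm x}_{\bm 0};k_{\bm 0})|\le \tfrac{k_{\bm 0}a^{3/2}}{4\pi}\|\bm J\|_{L^{2}(D)}$ with $k_{\bm 0}=\tfrac{2\pi\lambda}{a}$: the factor $\lambda$ in the numerator cancels the $1/k_{\bm 0}$ coming from the perturbed formula, and since $\tfrac{\lambda\pi}{\sin\lambda\pi}$ is bounded for small $\lambda$ this yields the $C_{3}\delta$ contribution. Summing (A), (B) and (C) and recording the dependence of the constants on $\|f\|_{L^{2}(D)}$, $\|g\|_{H^{1}(D)}$, $a$, $\lambda$, $\bm p$ (and, through the finite index range, $N$) gives \eqref{eq:f_cefficient0}.

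The only genuinely non-routine ingredient is the orthogonality/separation-of-variables computation that forces $l_{2}=l_{3}=0$ in the integral, which reduces the otherwise $O(N^{3})$ propagated-error sum to an $O(N)$ sum and, simultaneously, makes the truncation tail square-summable; everything else is bookkeeping with Cauchy--Schwarz and the a priori bounds above.
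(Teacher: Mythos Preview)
Your proposal is correct and follows essentially the same route as the paper: subtract the perturbed from the exact reconstruction formula for each coefficient, use Cauchy--Schwarz together with the noise bound and the integral representation of $\bm H_{\infty}$ to cancel $k_{\bm l}$, and for $\hat f_{\bm 0}$ perform the same three-way split into data noise, propagated error, and truncation tail, with the key observation that the integral $\int_{D}\mathrm e^{\mathrm i(k_{\bm l}\hat{\bm l}-k_{\bm 0}\hat{\bm x}_{\bm 0})\cdot\bm y}\,\mathrm d\bm y$ vanishes unless $l_{2}=l_{3}=0$. The only minor difference is that the paper cancels the factor $|\hat{\bm x}_{\bm l}\times\bm p|$ directly (by pulling $|\hat{\bm x}\times\bm p|$ out of the estimate on $|\hat{\bm x}\times\int\bm J|$), obtaining $C_{1},C_{2}$ that are manifestly independent of $\bm l$, $\bm p$, and $N$, whereas you rely on a positive lower bound over the finite index set; your honesty here is warranted, since the paper's cancellation step for the $\bm p\times\nabla g$ part of $\bm J$ is not fully justified as written.
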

\begin{proof}[\bf Proof.]
For $ \bm {l}\in \mathbb{Z}^3\backslash\{\bm 0\} $, from Schwarz inequality and \eqref{eq:Hf_hat}, we have
 \begin{align*}
   |\hat{f}^{ \delta}_{\bm {l}}- \hat{f}_{\bm {l}} |
   &= \left|\frac{4\pi \hat{\bm x }_{\bm l}\times \bm p }{\mathrm{i}k_{\bm l} a^3
   |\hat{\bm x}_{\bm l}\times \bm p |^2}\cdot \left(\bm H_{\infty}^{\delta}(\hat{\bm x}_{\bm l}; k_{\bm l})-\bm H_{\infty}(\hat{\bm x}_{\bm l}; k_{\bm l})\right)\right|\\
   &\leq \frac{4\pi }{\mathrm{i}k_{\bm l} a^3
   |\hat{\bm x}_{\bm l}\times \bm p |}\delta\left|\bm H_{\infty}(\hat{\bm x}_{\bm l}; k_{\bm l})\right|\\
   &\leq \frac{\delta}{ a^3
   |\hat{\bm x}_{\bm l}\times \bm p |}\left| \hat{\bm x}\times\int_{D} e^{-\mathrm{i}k\hat{\bm x}_{\bm l}\cdot \bm y} \, \bm J(y) \,\mathrm{d}\bm y  \right|\\
   &\leq \frac{\delta}{ a^3
   |\hat{\bm x}_{\bm l}\times \bm p |} |\hat{\bm x}\times \bm p| \left|\int_{D} e^{-\mathrm{i}k\hat{\bm x}_{\bm l}\cdot \bm y} \ ( f(\bm y)+|\nabla g(\bm y)|) \,\mathrm{d}\bm y  \right|\\
      &\leq \frac{\delta}{ a^3}  \left(\int_{D} \left|e^{-\mathrm{i}k\hat{\bm x}_{\bm l}\cdot \bm y}\right|^2 \,\mathrm{d}\bm y\right)^{1/2}   \left( \|f\|_{L^2(D)}+\|\nabla g\|_{L^2(D)}\right)  \\
      &\leq C_1 \delta
\end{align*}
where $ C_1 =(\|f\|_{L^2(D)}+\| g\|_{H^1(D)} )/{ a^{3/2}}$  and
it leads to estimate \eqref{eq:f_cefficient}.

Correspondingly, from \eqref{eq:Hg_hat}, we have
   \begin{align*}
  |\hat{g}^{ \delta}_{\bm {l}}- \hat{g}_{\bm {l}} |
   &=\left|-\frac{2 \hat{\bm x}_{\bm l}\times (\bm p\times \bm l) }{k_{\bm l} a^2 |\hat{\bm x}_{\bm l} \times (\bm p\times\bm l)|^2} \cdot \left(\bm H_{\infty}^{\delta}(\hat{\bm x}_{\bm l}; k_{\bm l})-\bm H_{\infty}(\hat{\bm x}_{\bm l}; k_{\bm l})\right)\right| \\
   &\leq \frac{2 }{k_{\bm l} a^2
   |\hat{\bm x}_{\bm l}\times (\bm p\times\bm l) |}\delta \left|\bm H_{\infty}(\hat{\bm x}_{\bm l}; k_{\bm l})\right|\\
   &\leq \frac{\delta}{ 2\pi |\bm l|a^2
   |\hat{\bm x}_{\bm l}\times \bm p |}\left| \hat{\bm x}\times\int_{D} e^{-\mathrm{i}k\hat{\bm x}_{\bm l}\cdot \bm y} \, \bm J(y) \,\mathrm{d}\bm y  \right|\\
   &\leq \frac{ \|f\|_{L^2(D)}+\| g\|_{H^1(D)} }{2\pi |\bm l| a^{1/2}}\  \delta\\
   &\leq C_2  \delta,
   \end{align*}
   where $C_2= (\|f\|_{L^2(D)}+\| g\|_{H^1(D)})/(2\pi a^{1/2})$ and it  verifies \eqref{eq:g_cefficient}.

For $ \bm {l}= \{\bm 0\} $, from Schwarz inequality and \eqref{eq:Hf0_hat}, we have
   \begin{align*}
     |\hat{f}_{\bm 0}^{\delta}-\hat{f}_{\bm 0}|
   \leq & \frac{\lambda \pi}{a^3 \sin \lambda \pi} \left|\frac{4\pi \hat{\bm x }_{\bm 0}\times \bm p }{\mathrm{i}k_{\bm 0}
   |\hat{\bm x}_{\bm 0}\times \bm p |^2}\cdot \left(\bm H_{\infty}^{\delta}(\hat{\bm x}_{\bm 0}; k_{\bm 0})-\bm H_{\infty}(\hat{\bm x}_{\bm 0}; k_{\bm 0})\right)\right|\\
   & +\underbrace{\frac{\lambda \pi}{ a^3 \sin \lambda \pi}\sum_{1\leq|\bm {l}|_{\infty}\leq N}\left|\left(\hat{f}^{\delta}_{\bm {l}}- \hat{f}_{\bm {l}}\right)  \int_{D}
  \mathrm{e}^{\mathrm{i}(k_{\bm l}\hat{\bm l}-k_{\bm 0}\hat{\bm x}_{\bm 0})\cdot \bm y} \, \mathrm{d}\bm y  \right|}_{I_1}\\
   &+\underbrace{\frac{\lambda \pi}{a^3 \sin \lambda \pi} \sum_{|\bm {l}|_{\infty}\geq N}\left|\hat{f}_{\bm {l}} \int_{D}
  \mathrm{e}^{\mathrm{i}(k_{\bm l}\hat{\bm l}-k_{\bm 0}\hat{\bm x}_{\bm 0})\cdot \bm y} \, \mathrm{d}\bm y \right|}_{I_2}\\
   \triangleq  & \ C_3  \delta+ I_{1}+I_{2}.
   \end{align*}
   where $ \displaystyle C_3={ \lambda \pi(\|f\|_{L^2(D)}+\| g\|_{H^1(D)})}/{(a^{9/2} \sin \lambda \pi)}$.

Define $\bm l=(l_1, l_2, l_3)\in\mathbb{ Z}^3$, from \eqref{eq:wavenumber} and \eqref{eq:x_hat},  we find that
\begin{equation*}
\int_{D} \mathrm{e}^{\mathrm{i}(k_{\bm l}\hat{\bm l}-k_{\bm 0}\hat{\bm x}_{\bm 0})\cdot \bm y} \, \mathrm{d}\bm y =\left\{
\begin{aligned}
 & \frac{a^3 \sin \, (l_1-\lambda)\pi}{(l_1-\lambda)\pi},    &|\bm l|=|l_1|, \\
 &  0, &|\bm l|\neq |l_1|,
\end{aligned}
\right.
\end{equation*}
which together with \eqref{eq:f_cefficient} gives
 \begin{align*}
 I_1 \leq &\frac{\lambda \pi}{a^3 \sin \lambda \pi}\sum_{1\leq|\bm {l}|_{\infty}\leq N}\left|\hat{f}^{\delta}_{\bm {l}}- \hat{f}_{\bm {l}}\right|  \left|\frac{a^3\sin (l_1-\lambda)\pi}{(l_1-\lambda)\pi} \right| \\
 \leq & \frac{\lambda \pi}{\sin \lambda \pi} 2 \sum_{j=1}^{N}\left( C_1 \delta \frac{ \sin \lambda\pi}{(j-\lambda)\pi}\right)\\
 \leq & C_4 \lambda N \delta,
 \end{align*}
where $C_4=2 C_1$.
On the other hand, one can deduce that
  \begin{align*}
     I_2 \leq &\frac{\lambda \pi}{a^3 \sin \lambda \pi}\sum_{|\bm {l}|_{\infty} > N}| \hat{f}_{\bm {l}}| \left|\frac{a^3 \sin (l_1-\lambda)\pi}{(l_1-\lambda)\pi}  \right| \\
     \leq & \frac{\lambda \pi}{\mathrm{sin}\,\lambda \pi}\left(\sum_{|\bm {l}|_{\infty} > N} | \hat{f}_{\bm {l}}|^2\right)^{1/2} \left(  \sum_{|\bm {l}|_{\infty} > N}\left|\frac{\sin (l_1-\lambda)\pi}{(l_1-\lambda)\pi}  \right|^2\right)^{1/2} \\
     \leq & \frac{\lambda \pi}{\mathrm{sin}\,\lambda \pi}\frac{1}{a^3}\left \|f \right\|_{L^2(D)}\left( 2\sum_{j=N+1}^\infty \left|\frac{ \mathrm{sin}\,\lambda\pi}{(j-\lambda)\pi}\right |^2 \right)^{1/2}\\
     \leq &\frac{2\lambda}{a^3\sqrt{N}}\|f\|_{L^2(D)} \\
     = & C_5 \frac{\lambda}{\sqrt{N}},
 \end{align*}
where $ C_5=2\|f\|_{L^2(D)}/a^3 $.  Finally, we obtain
\begin{equation*}
  |\hat{f}^{\delta}_{\bm 0}- \hat{f}_{\bm 0}|
  \leq   C_3\delta + C_4 \lambda N \delta+ C_5\frac{\lambda}{\sqrt{N}}.
\end{equation*}

The proof is complete.
\end{proof}

\begin{lemma}\label{a}\cite{Wang17}
    Let $\bm J$ be a vector function in $(H_{\bm p}^{\sigma}(D))^3 $ and $0\leq\mu\leq\sigma$, then the following estimate holds
     \begin{equation*}
      \|\bm J_N-\bm J\|_{\bm p, \mu}\leq N^{\mu-\sigma}\|\bm J \|_{\bm p,\sigma}, \quad  0\leq\mu\leq\sigma.
       \end{equation*}
        \end{lemma}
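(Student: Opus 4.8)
Since the statement is the classical Fourier-truncation estimate (here quoted from \cite{Wang17}), the plan is to read off both sides of the inequality directly from the series definition of the norm $\|\cdot\|_{\bm p,\mu}$. First I would note that, by \eqref{eq:J_N} and \eqref{eq:J_expansion}, $\bm J_N$ is precisely the partial sum of the Fourier expansion of $\bm J$ over the index set $\{\bm l\in\mathbb Z^3:|\bm l|_\infty\le N\}$, so that
\begin{equation*}
  \bm J-\bm J_N=\sum_{|\bm l|_\infty>N}\left(\bm p\,\hat f_{\bm l}+\frac{2\pi\mathrm i}{a}(\bm p\times\bm l)\,\hat g_{\bm l}\right)\phi_{\bm l},
\end{equation*}
which is again of the form $\bm p\tilde f+\bm p\times\nabla\tilde g$, with Fourier coefficients equal to $\hat f_{\bm l},\hat g_{\bm l}$ for $|\bm l|_\infty>N$ and to $0$ otherwise. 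Hence, by the very definition of $\|\cdot\|_{\bm p,\mu}$,
\begin{equation*}
  \|\bm J-\bm J_N\|_{\bm p,\mu}^2=\sum_{|\bm l|_\infty>N}(1+|\bm l|^2)^{\mu}|\hat f_{\bm l}|^2+\frac{4\pi^2}{a^2}\sum_{|\bm l|_\infty>N}(1+|\bm l|^2)^{\mu}|\bm p\times\bm l|^2|\hat g_{\bm l}|^2 .
\end{equation*}

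Next I would factor each weight as $(1+|\bm l|^2)^{\mu}=(1+|\bm l|^2)^{\mu-\sigma}(1+|\bm l|^2)^{\sigma}$. Every index in these tail sums satisfies $|\bm l|\ge|\bm l|_\infty\ge N+1$, hence $1+|\bm l|^2>N^2$; since $\mu-\sigma\le0$ the function $t\mapsto t^{\mu-\sigma}$ is non-increasing on $(0,\infty)$, so $(1+|\bm l|^2)^{\mu-\sigma}\le N^{2(\mu-\sigma)}$ uniformly over the tail. Pulling out this constant and then enlarging the summation from $\{|\bm l|_\infty>N\}$ to all of $\mathbb Z^3$ (resp. $\mathbb Z^3\backslash\{\bm 0\}$ for the $\hat g$ sum), which only adds non-negative terms, the bracketed quantity becomes exactly $\|\bm J\|_{\bm p,\sigma}^2$, and I obtain
\begin{equation*}
  \|\bm J-\bm J_N\|_{\bm p,\mu}^2\le N^{2(\mu-\sigma)}\|\bm J\|_{\bm p,\sigma}^2 .
\end{equation*}
Taking square roots yields the asserted bound.

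There is no genuine analytic obstacle here; the two points deserving a line of justification are that $\bm J\in(H_{\bm p}^\sigma(D))^3$ makes the $\sigma$-weighted series finite, so that the rearrangements above are legitimate and the tail genuinely tends to zero as $N\to\infty$, and that the hypothesis $\mu\le\sigma$ is used exactly once — to guarantee the monotonicity of $t\mapsto t^{\mu-\sigma}$ that produces the decay factor $N^{2(\mu-\sigma)}$. One could sharpen the constant slightly by keeping $1+(N+1)^2$ in place of $N^2$, but the stated estimate already follows from the crude bound $1+|\bm l|^2>N^2$.
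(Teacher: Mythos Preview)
Your argument is correct and is the standard proof of this Fourier-truncation estimate. Note that the paper does not actually prove Lemma~\ref{a}; it simply quotes the result from \cite{Wang17}, so there is no in-paper proof to compare against, but the short computation you give is exactly what one would expect and would be appropriate to include here.
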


The stability result is contained in the following theorem.

\begin{theorem}\label{A-2.3}
Let $\bm J \in (H_{\bm p}^{\sigma}(D))^3 $ and $0\leq\mu\leq\sigma$, then the following estimate holds
\begin{equation*}
  \| \bm J_{N}^{\delta}- \bm J \|_{\bm p, \mu}
  \leq  C_6\delta +  C_6 \lambda N \delta+ C_6\frac{\lambda}{\sqrt{N}}+ C_7 N^{\mu+3/2}\delta+ C_8 N^{\mu+5/2}\delta+N^{\mu-\sigma}\|\bm J \|_{\bm p,\sigma},
\end{equation*}
where $C_6, C_7, C_8$ depend only on $f, g, a $ and $\lambda$.
\end{theorem}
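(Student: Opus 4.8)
The plan is to route the error through the intermediate quantity $\bm J_N$ from \eqref{eq:J_N} --- the truncated reconstruction built from the \emph{exact} Fourier coefficients --- and to handle the truncation error and the data-noise error by different tools. By the triangle inequality,
\begin{equation*}
  \|\bm J_N^{\delta}-\bm J\|_{\bm p,\mu}\le\|\bm J_N^{\delta}-\bm J_N\|_{\bm p,\mu}+\|\bm J_N-\bm J\|_{\bm p,\mu}.
\end{equation*}
The second summand is precisely the truncation error, and since $0\le\mu\le\sigma$ Lemma~\ref{a} applies directly and produces the term $N^{\mu-\sigma}\|\bm J\|_{\bm p,\sigma}$. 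Everything else has to be extracted from the first summand.

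For the data-noise term, the point is that $\bm J_N^{\delta}-\bm J_N$ is again of the admissible form
\begin{equation*}
  \bm J_N^{\delta}-\bm J_N=\bm p\,(\hat f_{\bm 0}^{\delta}-\hat f_{\bm 0})+\sum_{1\le|\bm l|_{\infty}\le N}\left(\bm p\,(\hat f_{\bm l}^{\delta}-\hat f_{\bm l})+\frac{2\pi\mathrm{i}}{a}(\bm p\times\bm l)(\hat g_{\bm l}^{\delta}-\hat g_{\bm l})\right)\phi_{\bm l},
\end{equation*}
so by the very definition of the norm $\|\cdot\|_{\bm p,\mu}$,
\begin{align*}
  \|\bm J_N^{\delta}-\bm J_N\|_{\bm p,\mu}^2={}&|\hat f_{\bm 0}^{\delta}-\hat f_{\bm 0}|^2+\sum_{1\le|\bm l|_{\infty}\le N}(1+|\bm l|^2)^{\mu}\,|\hat f_{\bm l}^{\delta}-\hat f_{\bm l}|^2\\
  &+\frac{4\pi^2}{a^2}\sum_{1\le|\bm l|_{\infty}\le N}(1+|\bm l|^2)^{\mu}\,|\bm p\times\bm l|^2\,|\hat g_{\bm l}^{\delta}-\hat g_{\bm l}|^2.
\end{align*}
Into this I would substitute the coefficient estimates \eqref{eq:f_cefficient}, \eqref{eq:g_cefficient} and \eqref{eq:f_cefficient0} already established. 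For $1\le|\bm l|_{\infty}\le N$ one has $|\bm l|\le\sqrt3\,N$, hence $(1+|\bm l|^2)^{\mu}\le C N^{2\mu}$, and also $|\bm p\times\bm l|^2\le|\bm l|^2\le 3N^2$; together with the elementary count $\#\{\bm l\in\mathbb{Z}^3:|\bm l|_{\infty}\le N\}=(2N+1)^3=O(N^3)$ this shows that the $f$-sum is $\le C_1^2\delta^2\cdot O(N^{2\mu+3})$ and the $g$-sum is $\le (4\pi^2/a^2)C_2^2\delta^2\cdot O(N^{2\mu+5})$. Taking square roots and using $\sqrt{s_1^2+s_2^2+s_3^2}\le s_1+s_2+s_3$ for $s_j\ge0$, the $\bm l=\bm 0$ term reproduces $C_6\delta+C_6\lambda N\delta+C_6\lambda/\sqrt N$ by \eqref{eq:f_cefficient0}, the $f$-sum gives $C_7 N^{\mu+3/2}\delta$, and the $g$-sum gives $C_8 N^{\mu+5/2}\delta$; adding the Lemma~\ref{a} contribution yields the asserted bound.

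The argument is mostly bookkeeping, but two things call for care. First, the index $\bm l=\bm 0$ must be kept separate throughout: the difference $\bm J_N^{\delta}-\bm J_N$ carries a zeroth $f$-coefficient but no zeroth $g$-coefficient, and its error bound \eqref{eq:f_cefficient0} has the qualitatively different three-term shape $C_3\delta+C_4\lambda N\delta+C_5\lambda/\sqrt N$ rather than a clean $O(\delta)$, so it cannot be absorbed into the sums over $1\le|\bm l|_{\infty}\le N$. Second --- and this is where the exponents $\mu+3/2$ and $\mu+5/2$ originate --- one must correctly combine the spectral weight $(1+|\bm l|^2)^{\mu}$ (a factor $N^{2\mu}$) and, for the $g$-part only, the extra factor $|\bm p\times\bm l|^2$ (a further factor $N^2$) with the $O(N^3)$ cardinality of the truncation index set before taking the square root. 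No genuine analytic difficulty arises beyond what is already packaged in \eqref{eq:f_cefficient}--\eqref{eq:f_cefficient0} and in Lemma~\ref{a}; the whole estimate is a matter of assembling these pieces.
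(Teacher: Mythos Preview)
Your proposal is correct and follows essentially the same approach as the paper: split via the triangle inequality through $\bm J_N$, apply Lemma~\ref{a} for the truncation tail, expand $\|\bm J_N^{\delta}-\bm J_N\|_{\bm p,\mu}$ by the norm definition, isolate the $\bm l=\bm 0$ term, and bound the remaining sums using \eqref{eq:f_cefficient}--\eqref{eq:f_cefficient0} together with the weight and cardinality estimates. The paper's proof is identical in structure (it merely postpones the triangle inequality to the end and leaves the $O(N^{2\mu+3})$, $O(N^{2\mu+5})$ counting implicit), so your more explicit accounting of the exponents is, if anything, a slight expository improvement.
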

\begin{proof}[\bf Proof.]
It is readily seen that
\begin{equation}\label{3.4}
  \begin{aligned}
  &\| \bm J_{N}^{ \delta}- \bm J_{N}\|_{\bm p, \mu}\\
  \leq &\left(\sum_{|\bm l|_{\infty}=0}^N \left( 1+|{\bm l}|^2 \right)^{\mu}|\hat{f}_{\bm l}^{\delta}-\hat{f}_{\bm l}|^2
  + \frac{4\pi^2}{a^2}   \sum_{|\bm l|_{\infty}=1}^N\left( 1+|{\bm l}|^2 \right)^{\mu}|\bm p\times \bm l|^2 |\hat{g}_{\bm l}^{\delta}-\hat{g}_{\bm l}|^2
\right)^{1/2}\\
  \leq & |\hat{f}_{\bm 0}^{\delta}-\hat{f}_{\bm 0}| + \left(\sum_{|\bm l|_{\infty}=1}^N \left( 1+|{\bm l}|^2 \right)^{\mu}|\hat{f}_{\bm l}^{\delta}-\hat{f}_{\bm l}|^2\right)^{1/2}\\
       &+ \left(\frac{4\pi^2 }{a^2}  \sum_{|\bm l|_{\infty}=1}^N\left( 1+|{\bm l}|^2 \right)^{\mu}|\bm p\times \bm l|^2 |\hat{g}_{\bm l}^{\delta}-\hat{g}_{\bm l}|^2 \right)^{1/2}\\
\leq & \left(C_3\delta +  C_4 \lambda N \delta + C_5\frac{\lambda}{\sqrt{N}} \right) +  C_1\delta \left(\sum_{|\bm l|_{\infty}=1}^N \left( 1+|{\bm l}|^2 \right)^{\mu}\right)^{1/2}\\
  &+\frac{2\pi}{a} C_2\delta\left(\sum_{|\bm {l}|_{\infty}=1 }^N \left( 1+|\bm {l}|^2\right)^{\mu}\left|\bm l \right|^2 \right)^{1/2}\\
  \leq &   C_6\delta +  C_6 \lambda N \delta+ C_6\frac{\lambda}{\sqrt{N}}+ C_7 N^{\mu+3/2}\delta+ C_8 N^{\mu+5/2}\delta,
   \end{aligned}
 \end{equation}
 where $C_6=\max\, \{C_3, C_4, C_5\}$.
Hence, from \eqref{3.4} and Lemma \ref{a} , we obtain
\begin{equation*}
  \| \bm J_{N}^{ \delta}- \bm J \|_{\bm p, \mu}
  \leq  C_6\delta +  C_6 \lambda N \delta+ C_6\frac{\lambda}{\sqrt{N}}+ C_7 N^{\mu+3/2}\delta+ C_8 N^{\mu+5/2}\delta+N^{\mu-\sigma}\|\bm J \|_{\bm p,\sigma},
\end{equation*}
which completes the proof.
\end{proof}

\begin{rem}\label{rem:N}
If one takes $ N=\tau \delta^{-\frac{1}{\sigma+5/2}}$ with $\tau\geq 1$ in Theorem \ref{A-2.3} , we have
\begin{equation*}
  \begin{aligned}
  \| \bm J_{N}^{\delta}-\bm J \|_{\bm p, \mu}
  \leq &  C_6\delta +C_6\lambda\tau\delta^{\frac{\sigma+3/2}{\sigma+5/2}} +\frac{C_6\lambda}{ \sqrt{\tau}}\delta^{\frac{1}{2\sigma+5}}
+ C_7 \tau^{\mu+3/2}\delta^{\frac{1+\sigma-\mu}{\sigma+5/2}}\\
 &+ C_8 \tau^{\mu+5/2}\delta^{\frac{\sigma-\mu}{\sigma+5/2}} +\tau^{\mu-\sigma}\delta^{\frac{\sigma-\mu}{\sigma+5/2}}\|\bm J \|_{\bm p, \sigma}, \quad 0\leq \mu\leq \sigma.
  \end{aligned}
  \end{equation*}
\end{rem}
\section{Numerical examples }
In this section, we carry out a series of numerical experiments to illustrate that the proposed Fourier reconstruction method is effective and efficient.

First, we briefly describe some parameters setting of our numerical experiments. Let $D=[-0.5, 0.5]^3$, namely, $a=1$. Assume that the wave propagates in the vacuum space, where $\mu_0=4\pi\times10^{-7}$ and $\varepsilon_0=8.8541\times10^{-12}$.
Synthetic electromagnetic  far-field data are generated by solving the direct problem of
\eqref{eq: Maxwell} by using the quadratic finite elements on a truncated spherical domain enclosed
by a PML layer. The mesh of the forward solver is successively refined till
the relative error of the successive measured electromagnetic wave data is below $0.1\%$.
To show the stability of our proposed method, we also add  some random noise to the synthetic far-field data by considering
\begin{equation*}
\begin{aligned}
 &\bm E_{\infty}^ \delta:=\bm  E_{\infty} +\delta r_1 |\bm  E_{\infty} |_{\infty}\mathrm{e}^{\rm{i}\pi r_2},\\
  &\bm H_{\infty}^ \delta:=\bm  H_{\infty} +\delta r_1 |\bm  H_{\infty} |_{\infty}\mathrm{e}^{\rm{i}\pi r_2},
 \end{aligned}
 \end{equation*}
where $r_1 $ and $ r_2$ are two uniform random numbers, both ranging from $-1$ to $1$, and $\delta>0$ represents the noise level.
From Remark \ref{rem:N}, the truncation $N$ is given by
\begin{equation}\label{eq:N}
  N(\delta):=[3\delta^{-2/7}]+1,
\end{equation}
where $[X]$ denotes the largest integer that is smaller than $X+1$.

Next, we specify details of obtaining the artificial multi-frequency electromagnetic far-field data.   Let
\begin{equation*}
 \mathbb{L}_{N}:= \{\bm l \in \mathbb{Z}^{3}\mid 1 \leq |\bm{l}|_\infty\leq N \},
\end{equation*}
then the wavenumber set is given by
\begin{equation*}
\mathbb{K}_N:= \left\{2\pi|\bm{l}|:\bm{l}\in  \mathbb{L}_{N}  \right \} \cup \{2\pi\lambda\}, \quad \lambda=10^{-3},
\end{equation*}
and the observation directions are given by
\begin{equation*}
 \mathbb{X}_N:= \left\{\frac{\bm l}{|\bm l|}:\bm{l}\in  \mathbb{L}_{N}  \right \} \cup \{(1,0,0)\}.
\end{equation*}
Thus,  every wavenumber and  observation direction can be denoted by $k_{j}\in \mathbb{K}_{N}$ and $ \hat{\bm x}_j \in  \mathbb{X}_N $, respectively, where $\, j=1,2,\cdots,(2N+1)^{3}$.
Correspondingly, the frequency $\omega_j$ is chosen as $\omega_j=k_j/\sqrt{\mu_0 \varepsilon_0}$. With the admissible wavenumbers defined earlier, the artificial electromagnetic far-field  data with noise can be written as
\begin{equation*}
   \left \{\left(\bm E_\infty^\delta(\hat{x}_{j};k_{j}),\bm H_\infty^\delta(\hat{x}_{j};k_{j})\right ) : \hat{\bm x}_{j}\in \mathbb{K}_{N}, k_{j}\in \mathbb{K}_{N},\, j=1,2, \cdots,(2N+1)^{3}\right \}.
\end{equation*}

 Finally, we specify details of the numerical inversion via the Fourier method. We reconstruct the electric current source  $\bm J(\bm x),\, \bm x\in D$ by the truncated Fourier expansion $\bm J_N^{\delta}(\bm x),\, \bm x\in D$,
  where
\begin{equation*}
\bm J=\begin{bmatrix}
J_{1}\\
J_{2} \\
J_{3}
\end{bmatrix}, \quad
\bm J_N^{\delta}=\begin{bmatrix}
J_{1}^{N}\\
J_{2}^{N} \\
J_{3}^{N}
\end{bmatrix}.
\end{equation*}
Given the  noisy far-field data defined above,
if we use the electric far-field data $\{\bm E_\infty^\delta(\hat{x}_{j};k_{j})\}$, then
the Fourier coefficients $\hat{f}_{\bm l}, \hat{g}_{\bm l}, 1\leq |\bm l|_\infty \leq N $  and $\hat{f}_{\bm 0}$ are computed by   \eqref{eq:Ef_hat}, \eqref{eq:Eg_hat} and \eqref{eq:Ef0_hat}, respectively. If we use the magnetic far-field data $\{\bm H_\infty^\delta(\hat{x}_{j};k_{j})\}$, then
the Fourier coefficients $\hat{f}_{\bm l}, \hat{g}_{\bm l}, 1\leq |\bm l|_\infty \leq N $  and $\hat{f}_{\bm 0}$ are computed by   \eqref{eq:Hf_hat}, \eqref{eq:Hg_hat} and \eqref{eq:Hf0_hat}, respectively.
Divide the  domain $D$ into a mesh with  a uniform grid of size $ 50\times 50\times 50  $. The approximated  Fourier series $\bm J_N^{\delta}(\bm z)$ are computed at the mesh nodes $\bm z_j,\, j=1,2,  \cdots, 50^3$ by  \eqref{eq:J_N}. The relative error is defined as
 \begin{equation*}
   \mathrm{relative \ error}=\frac{\|\bm J-\bm J_N^{\delta}\|_{L^2(D)}}
   {\|\bm J\|_{L^2(D)}}.
 \end{equation*}
Unless specified otherwise, we use the magnetic far-field data to reconstruct the electric current source.

Based on the above discussion, we formulate the reconstruction scheme by the Fourier method in Algorithm S as follows.

\begin{table}[htp]
\centering
\begin{tabular}{cp{.8\textwidth}}
\toprule
\multicolumn{2}{l}{{\bf Algorithm S:}\quad Fourier method for reconstructing  the electromagnetic source }\\
\midrule
 {\bf Step 1} & Choose the parameters $\lambda$, $N$, the wavenumber set $\mathbb{K}_N$ and observation direction set $\mathbb{X}_N$.  \\
{\bf Step 2} & Collect the measured electric far-field data $\bm E_\infty^\delta(\hat{x}_{j};k_{j})$  or the magnetic far-field data $\bm H_\infty^\delta(\hat{x}_{j};k_{j})$  for $\hat{x}_{j}\in \mathbb{X}_N$ and $k_{j}\in \mathbb{K}_N $.\\
{\bf Step 3} & Compute the Fourier coefficients $\hat{f}_{\bm 0}$,  $\hat{f}_{\bm l}$ and $\hat{g}_{\bm l}$ for $1\leq |\bm l|_{\infty}\leq N$. \\
{\bf Step 4} &  Select a sampling mesh  $\mathcal{T}_h$  in a region $D$. For each sampling point $z_j\in \mathcal{T}_h $, calculate the imaging functional $\bm J_N$ defined in $\eqref{eq:J_N}$, then $\bm J_N$ is the  reconstruction of $\bm J$. \\
\bottomrule
\end{tabular}
\end{table}

\begin{example}\label{example1}
 In this example, we numerically estimate the stability of the proposed method. We consider the following smooth source function
 \begin{equation*}
\bm J=\bm p\times \nabla g,
 \end{equation*}
where
\begin{equation*}
  \begin{aligned}
   &\bm p=\frac{1}{4}\left(\sqrt{5}, -2, \sqrt{7} \right),\\
   &g(x_1, x_2, x_3)=10\left(x_1^2+x_2^2\right)\exp\left(-50\left(x_1^2+x_2^2+x_3^2\right)\right).
  \end{aligned}
\end{equation*}
\end{example}

\begin{figure}
\hfill\subfigure[]{\includegraphics[width=0.32\textwidth]
                   {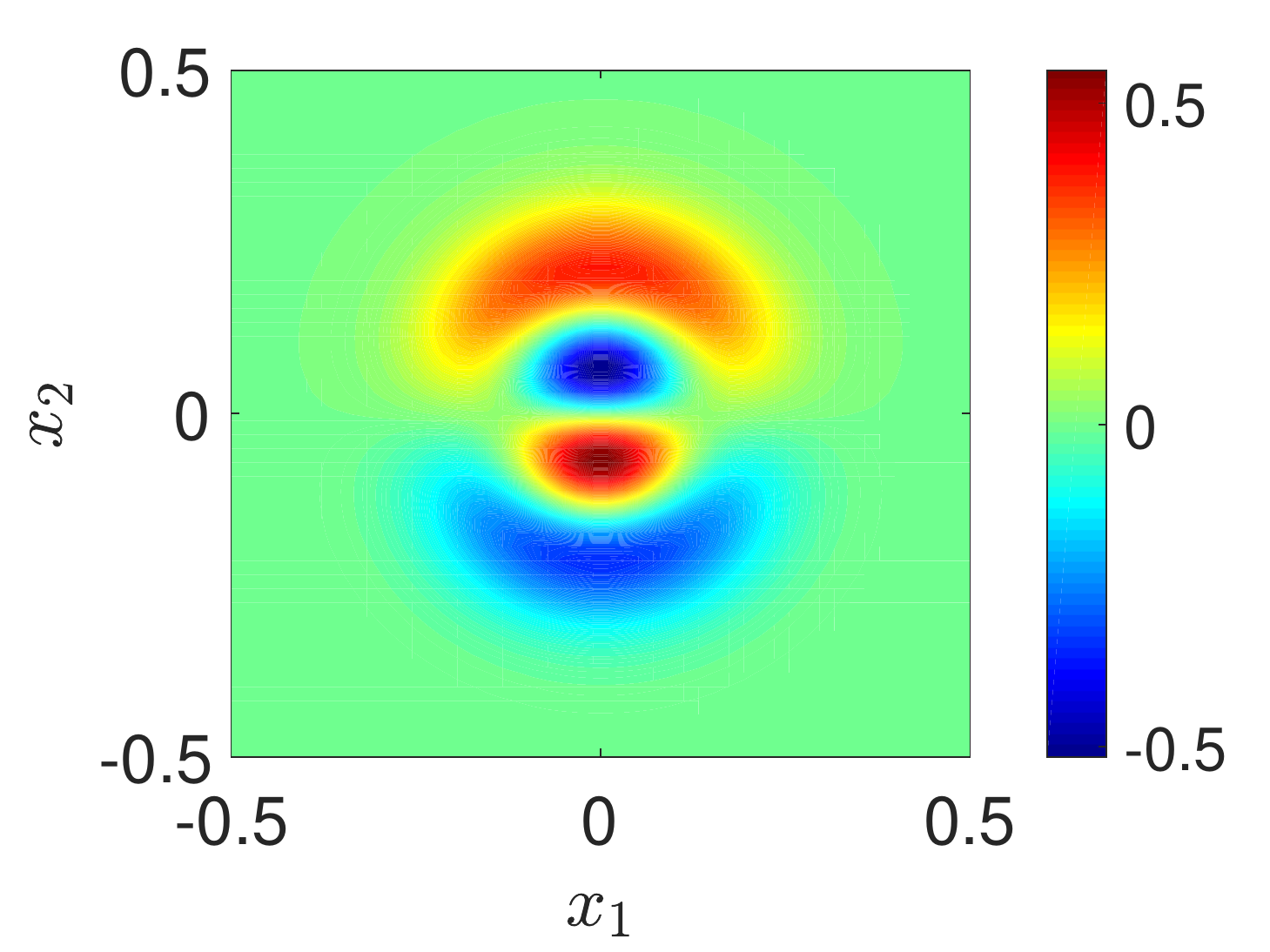}}\hfill
\hfill\subfigure[]{\includegraphics[width=0.32\textwidth]
                   {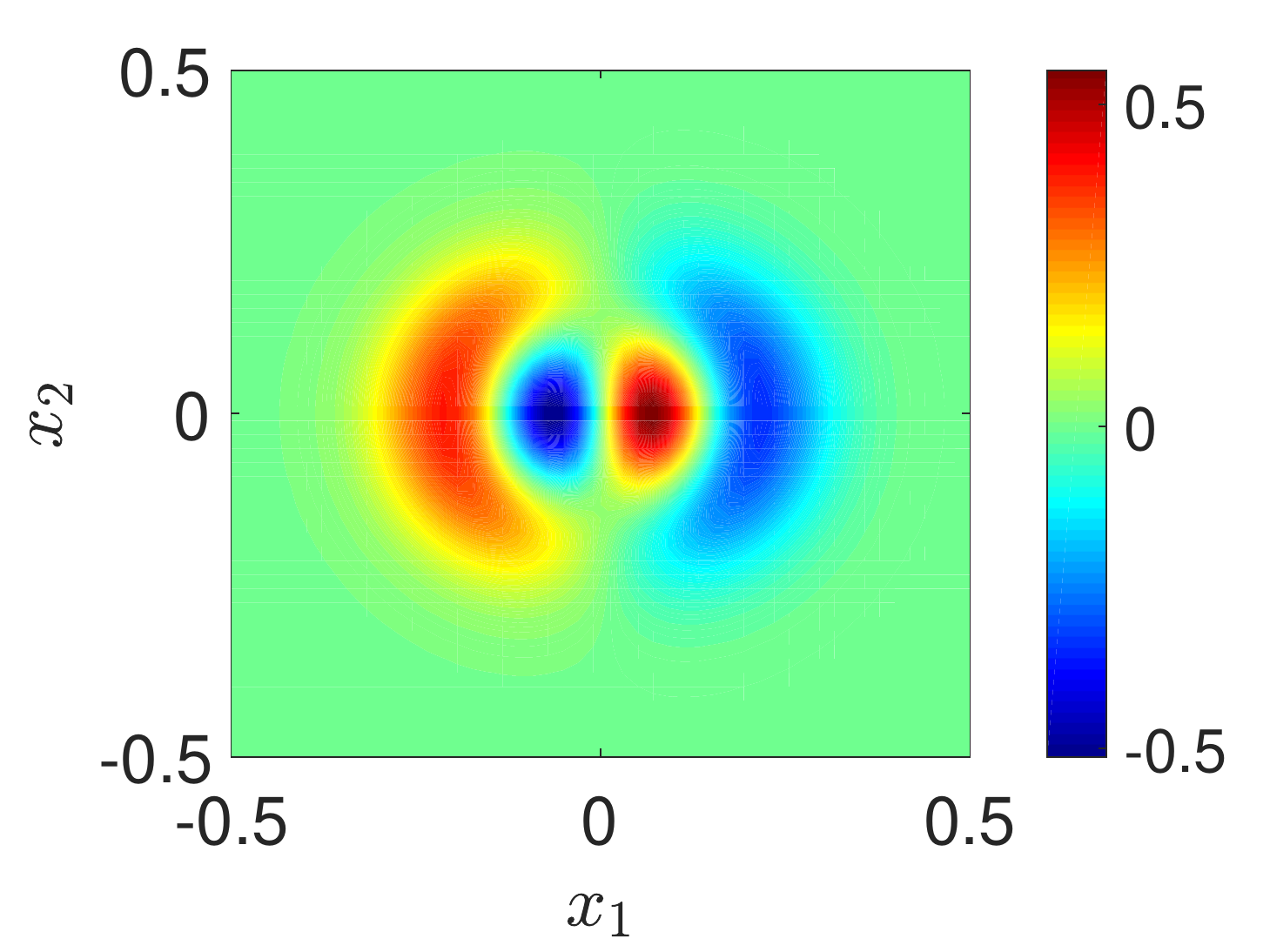}} \hfill
\hfill\subfigure[]{\includegraphics[width=0.32\textwidth]
                   {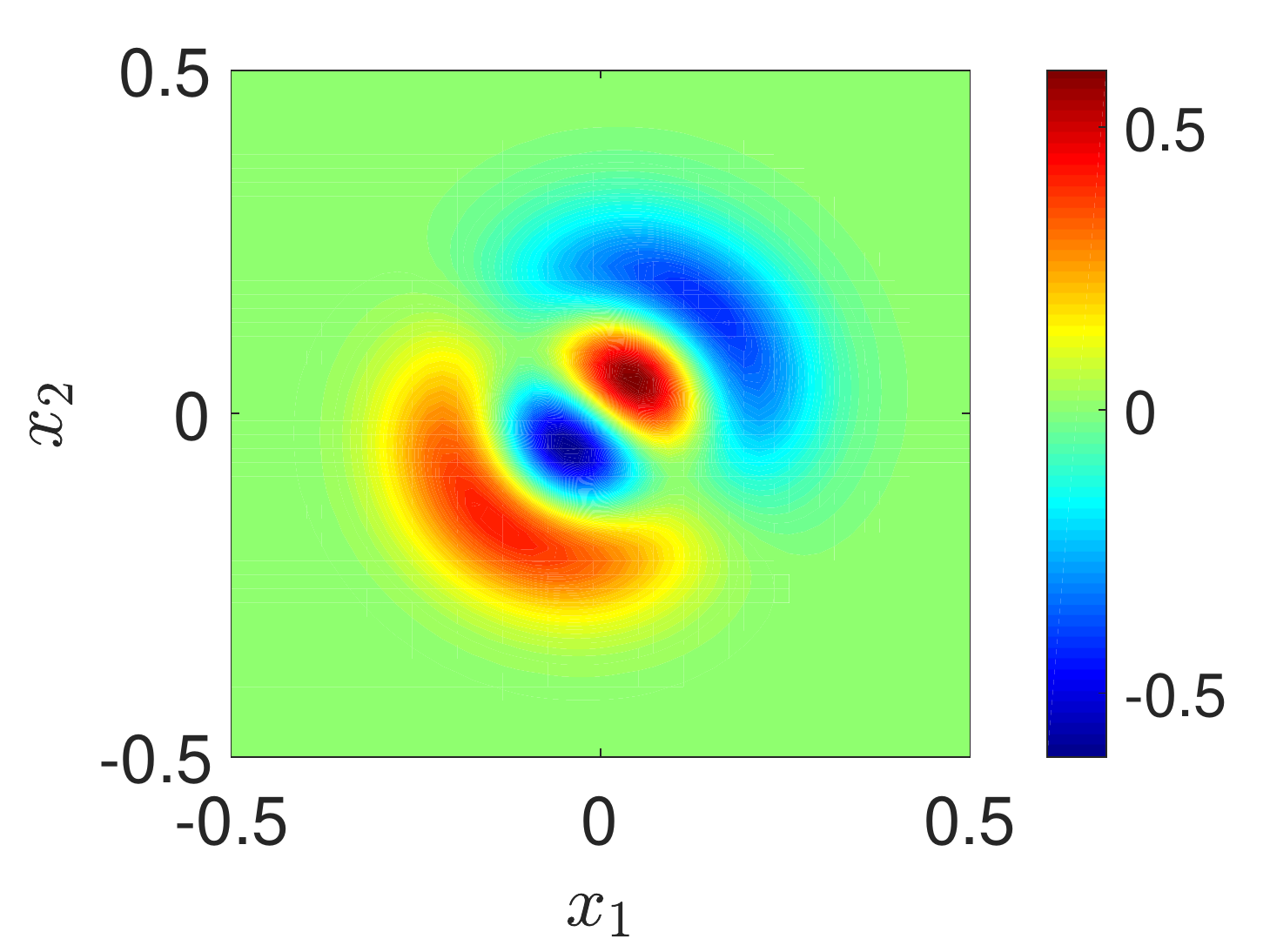}} \hfill\\
\hfill\subfigure[]{\includegraphics[width=0.32\textwidth]
                   {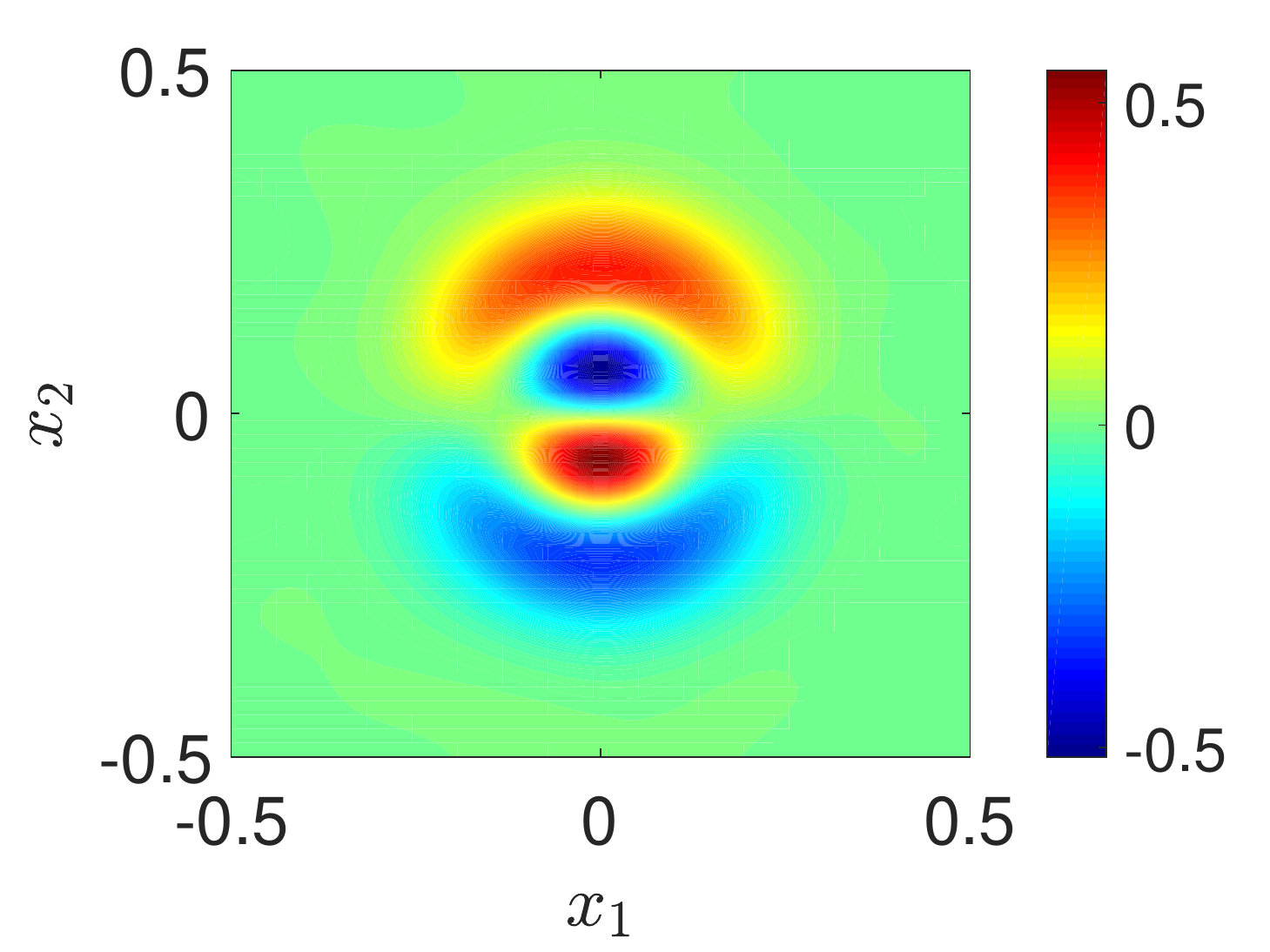}}\hfill
\hfill\subfigure[]{\includegraphics[width=0.32\textwidth]
                   {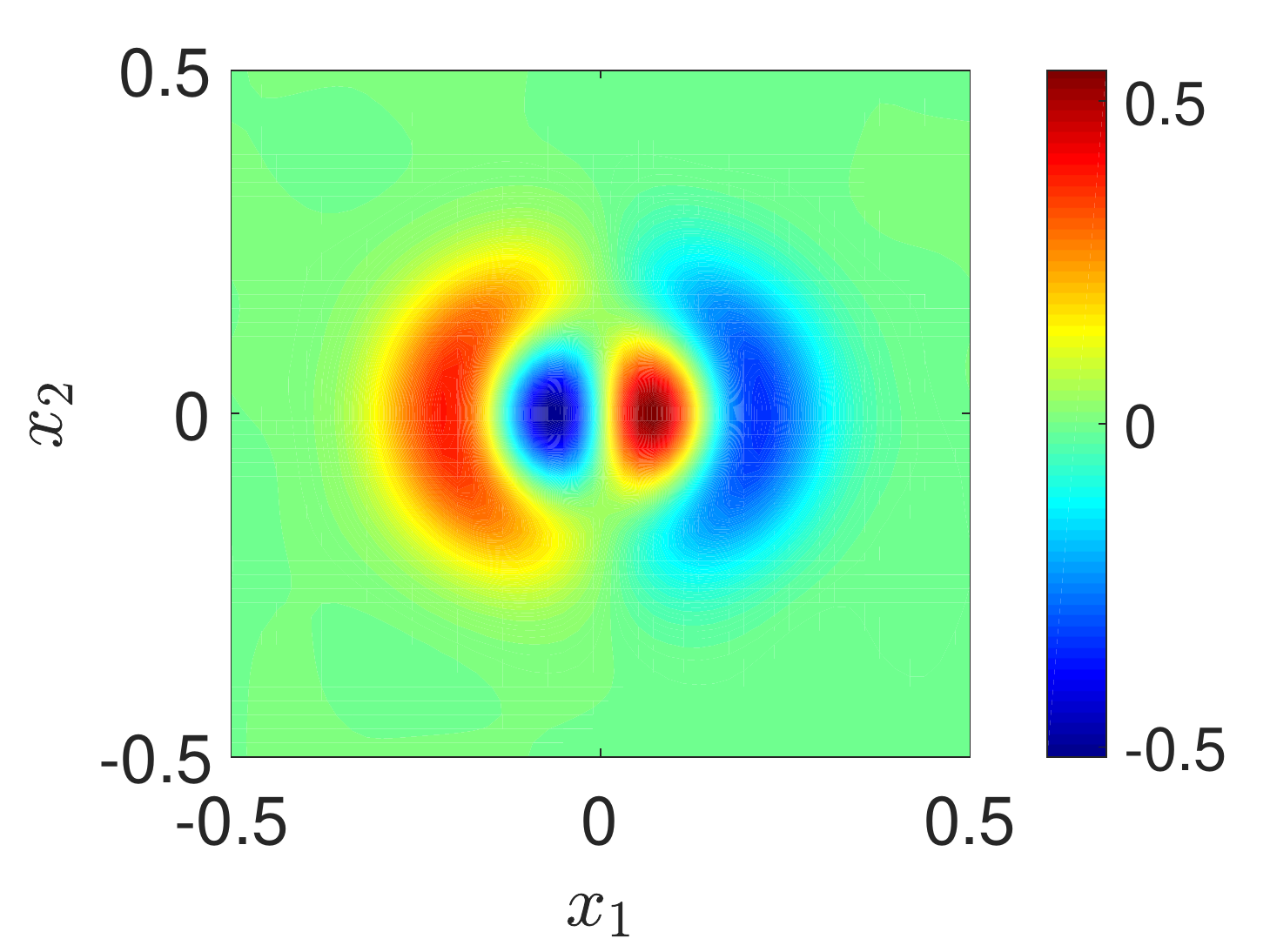}} \hfill
\hfill\subfigure[]{\includegraphics[width=0.32\textwidth]
                   {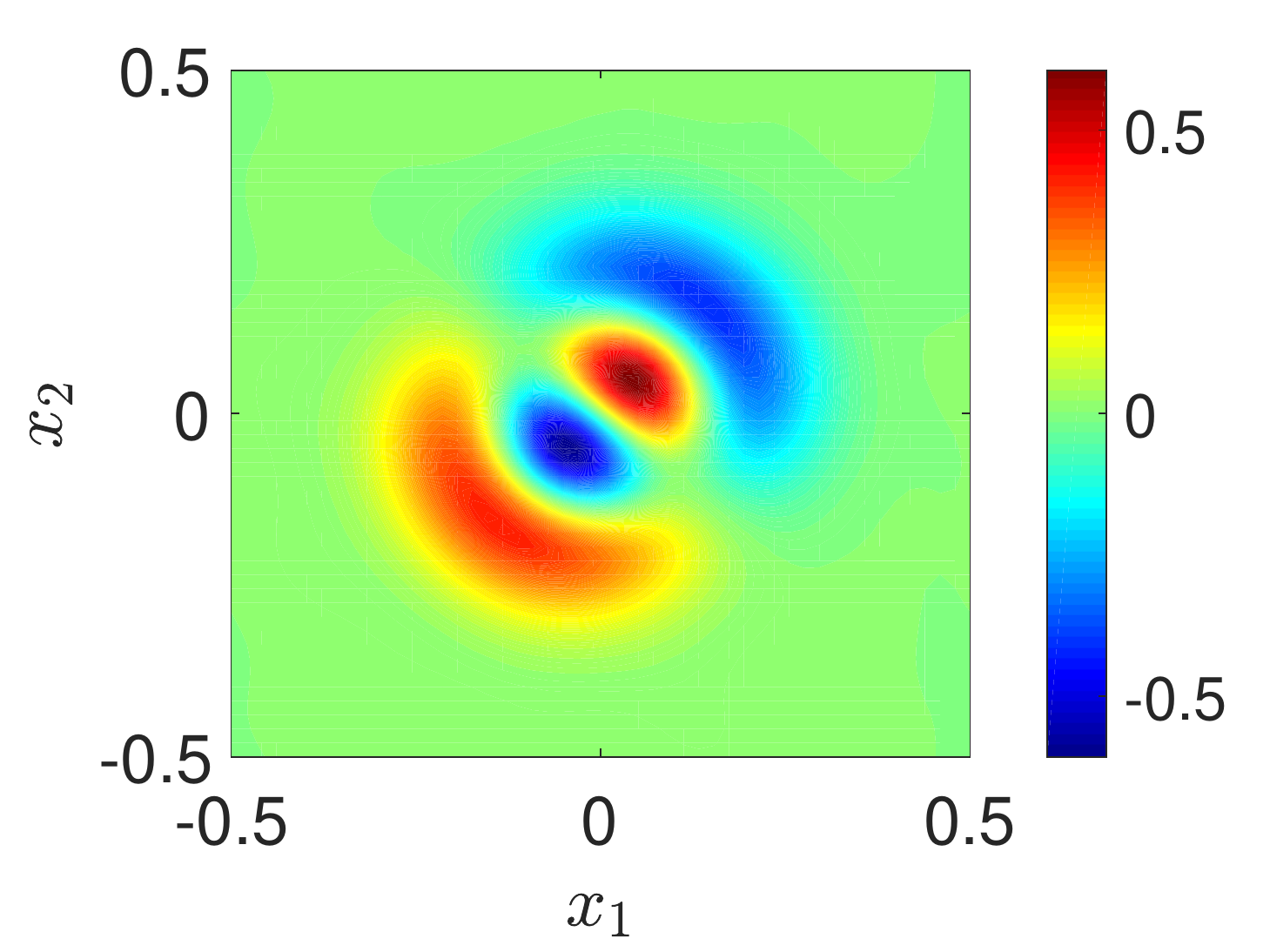}} \hfill
\caption{\label{fig:2} Contour plots of the  exact  and reconstructed  source function of Example \ref{example1} at the plane $x_3=0$, where $\delta=2\%$. (a) $J_1$,  (b) $J_2$,  (c) $J_3$, (d) $J_1^{10}$, (e) $J_2^{10}$, (f) $ J_3^{10}$.}
\end{figure}

\begin{table}[h]
\center
 \caption{ The relative errors of the reconstructions  with different noise levels $\delta$. }\label{tab1}
 \begin{tabular}{lcccccc}
  \toprule
 & $ \delta $        & 2\%     & 5\%   & 10\%    &20\%    \\
  \midrule
& $ N(\delta) $                & 10      & 8     & 6       &5 \\
& Relative error       & 0.10\%   &2.10\% & 4.26\%  & 8.94\%   \\
& Time (second)          & 78    & 40   & 12        &11   \\
  \bottomrule
 \end{tabular}
\end{table}
Figure \ref{fig:2} shows the comparison between the exact and the reconstructed source function at the plane $x_3=0$ with the additional noise  $\delta=2\%$. We observe that
the reconstructions are very close to the exact one. To exhibit the accuracy quantitatively, we list the relative errors in $L^2$  in table \ref{tab1}. Meanwhile, table \ref{tab1} illustrates that the stability and CPU time increase as the truncation order $N(\delta)$ increases.

\begin{example}\label{example2}
 In this example, we use the electric far-field data to recover the source.  We aim to recover a smooth source as follows
 \begin{equation*}
\bm J=\bm p f + \bm p\times \nabla g;
 \end{equation*}
where
\begin{equation*}
  \begin{aligned}
   &\bm p= \frac{1}{3}\left(\sqrt{5}, -1, \sqrt{3}\right),\\
   & f(x_1, x_2, x_3) =3\exp\left(-80\left((x_1-0.15)^2+(x_2-0.15)^2+x_3^2\right)\right),\\
   &g(x_1, x_2, x_3) =0.3\exp\left(-40\left(x_1^2+x_2^2+x_3^2\right)\right).
  \end{aligned}
\end{equation*}
\end{example}
\begin{figure}
\hfill\subfigure[]{\includegraphics[width=0.32\textwidth]
                   {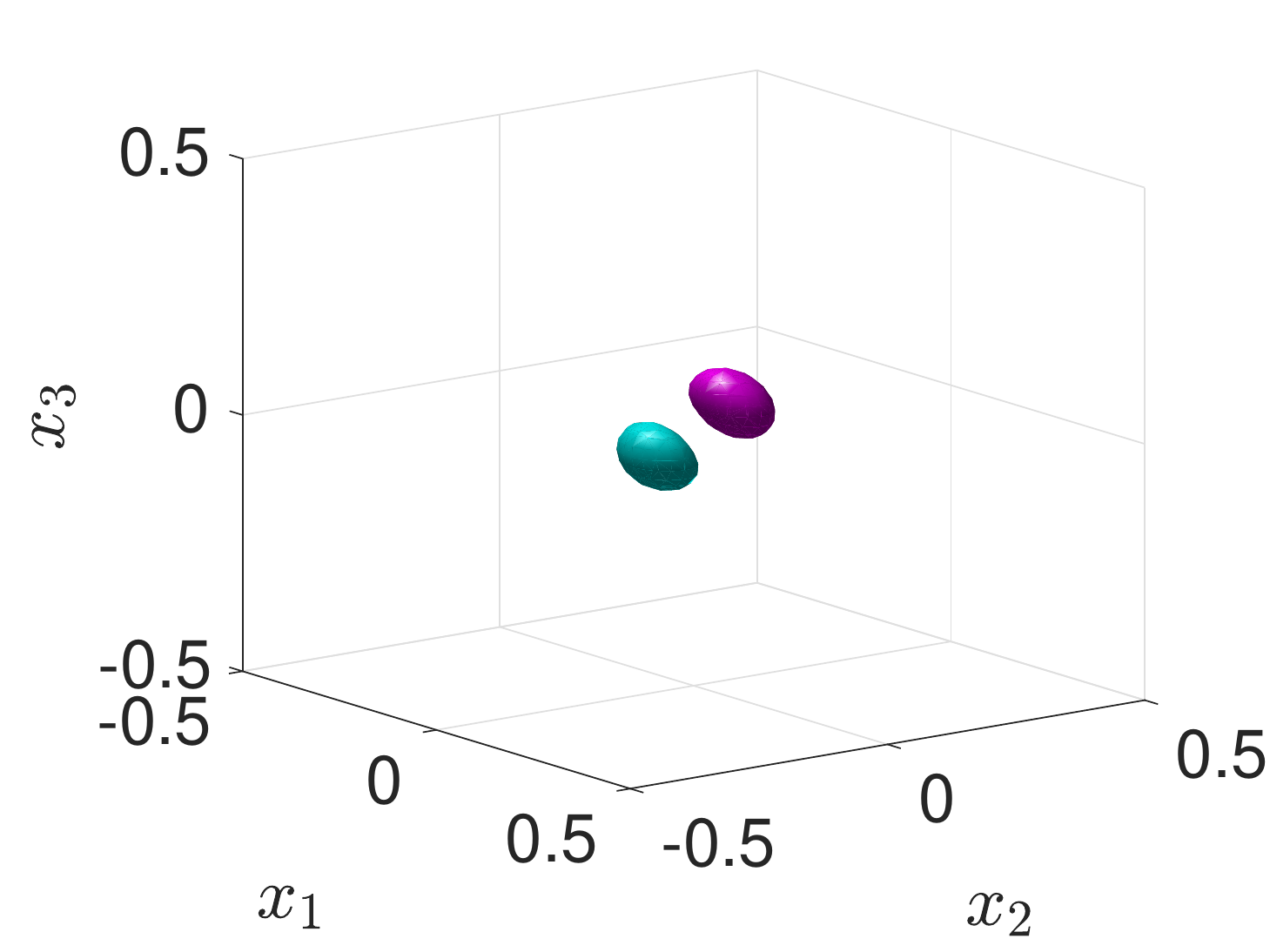}}\hfill
\hfill\subfigure[]{\includegraphics[width=0.32\textwidth]
                   {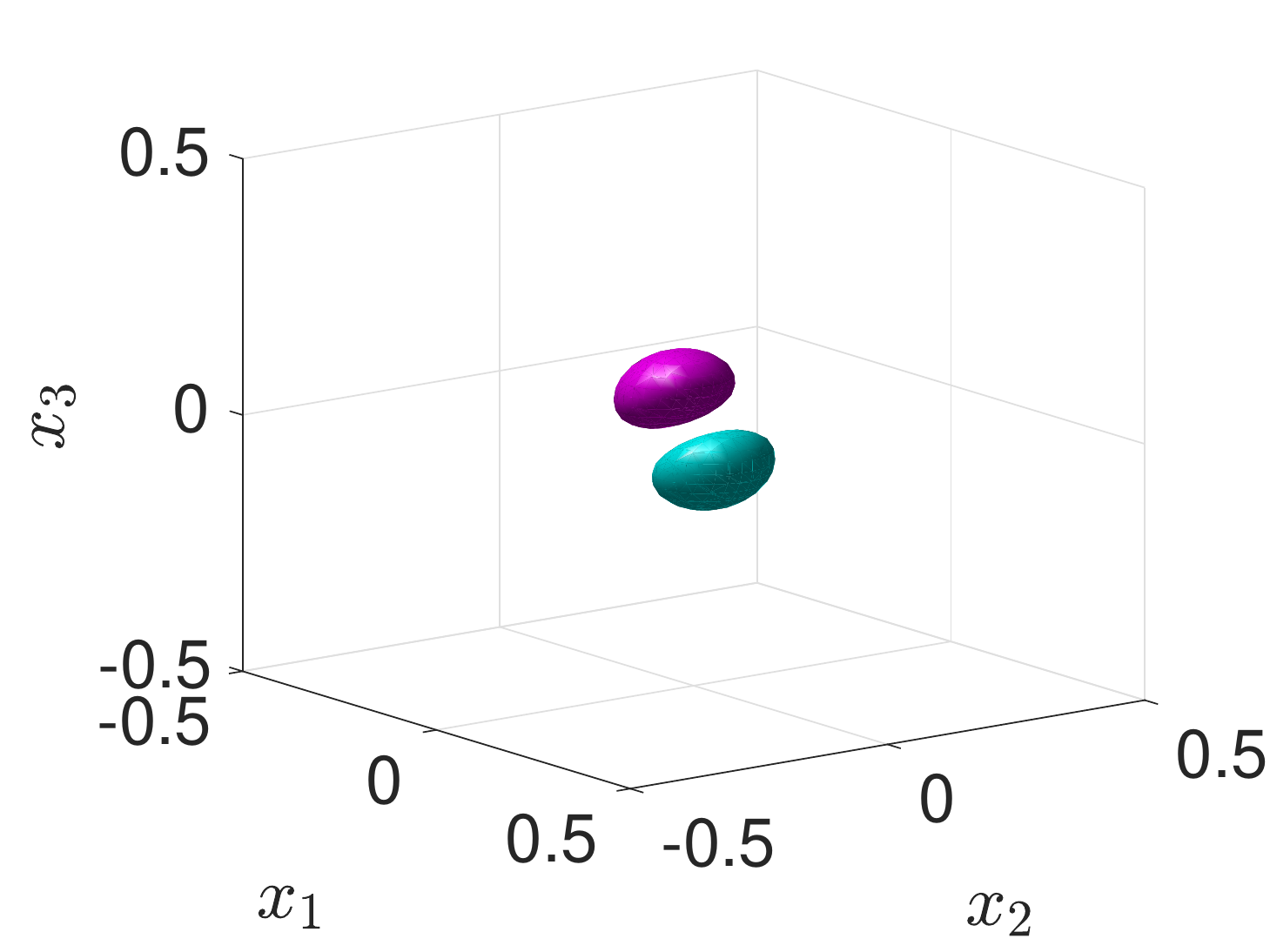}} \hfill
\hfill\subfigure[]{\includegraphics[width=0.32\textwidth]
                   {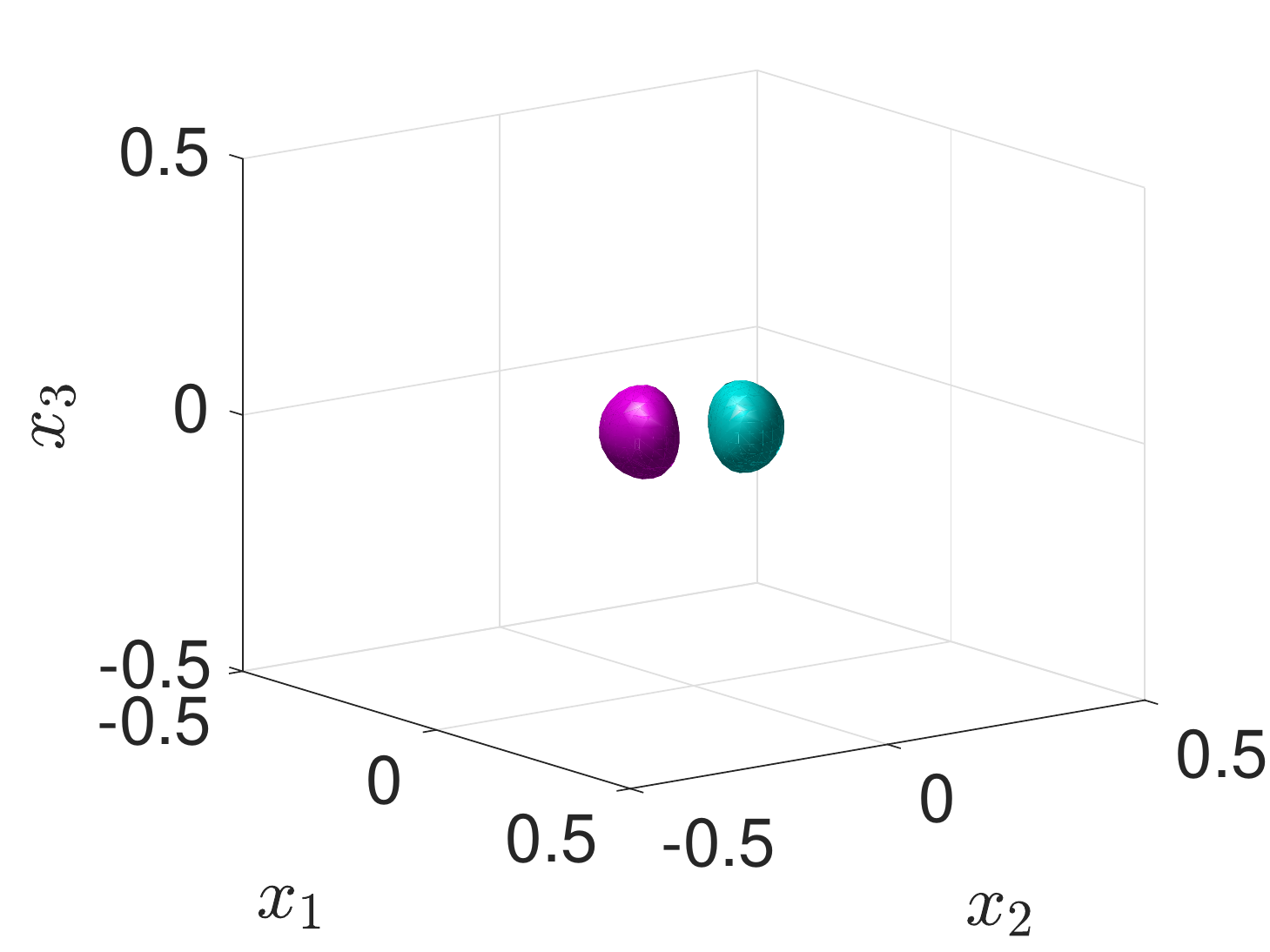}} \hfill\\
\hfill\subfigure[]{\includegraphics[width=0.32\textwidth]
                   {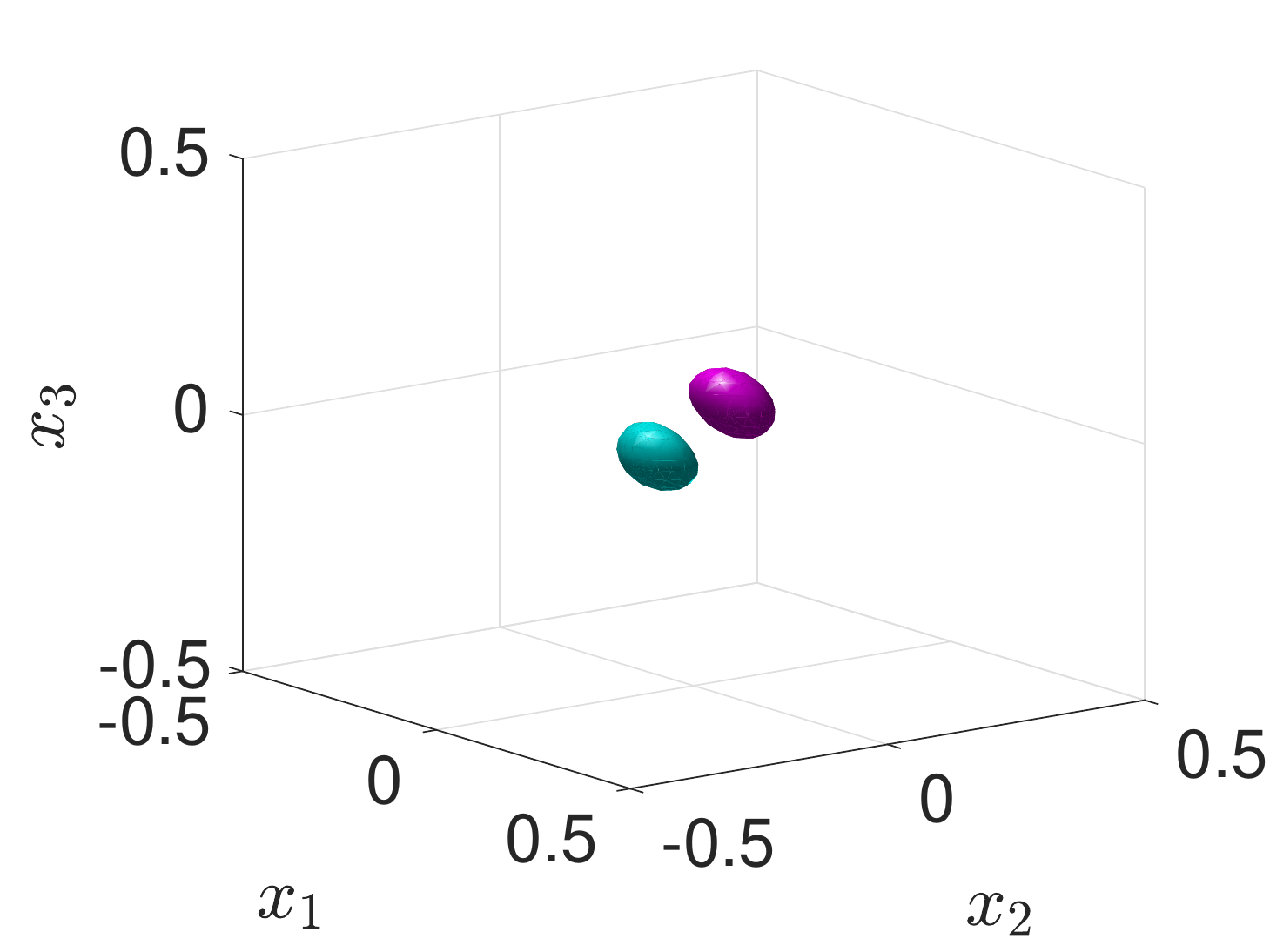}}\hfill
\hfill\subfigure[]{\includegraphics[width=0.32\textwidth]
                   {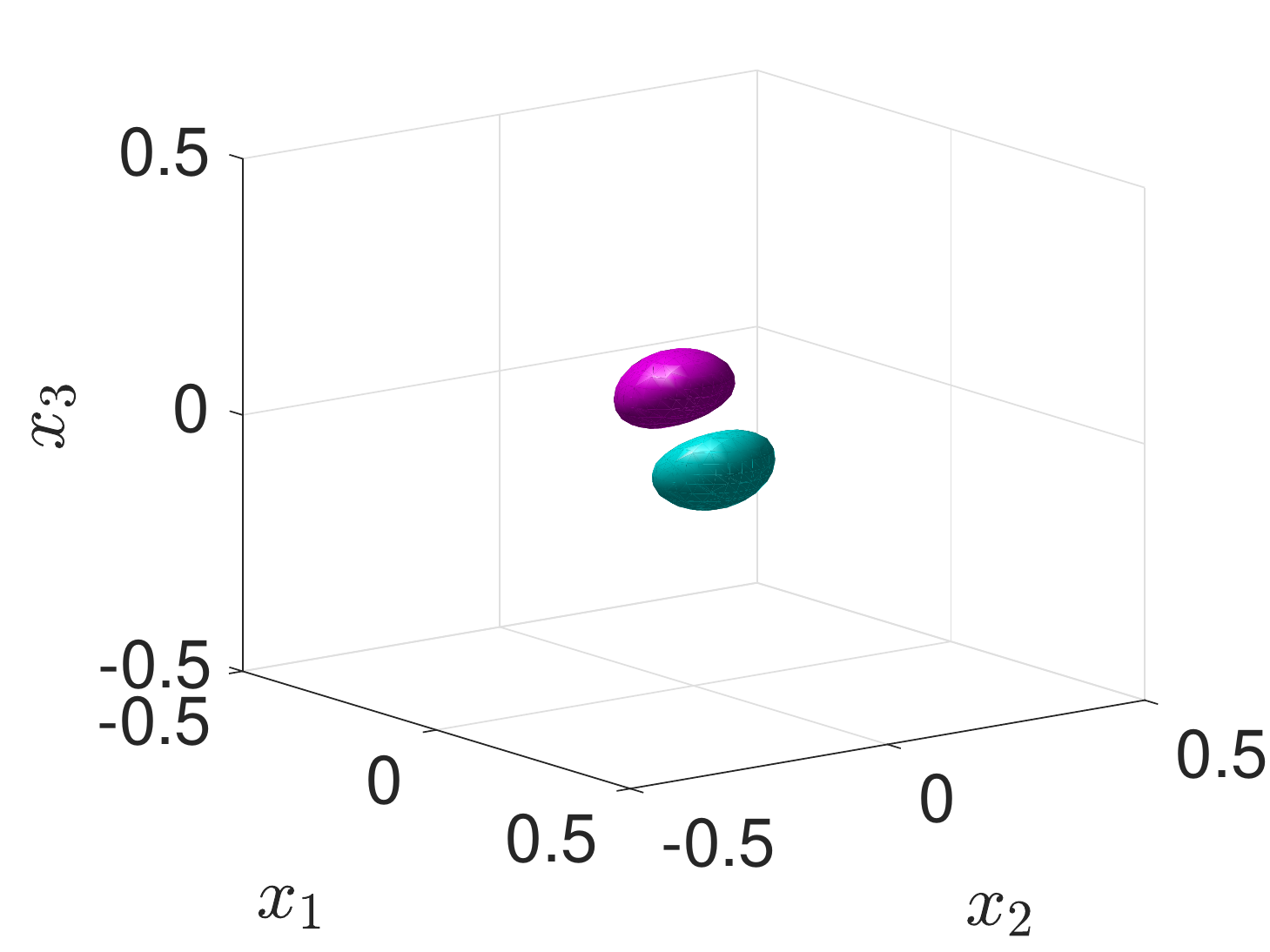}} \hfill
\hfill\subfigure[]{\includegraphics[width=0.32\textwidth]
                   {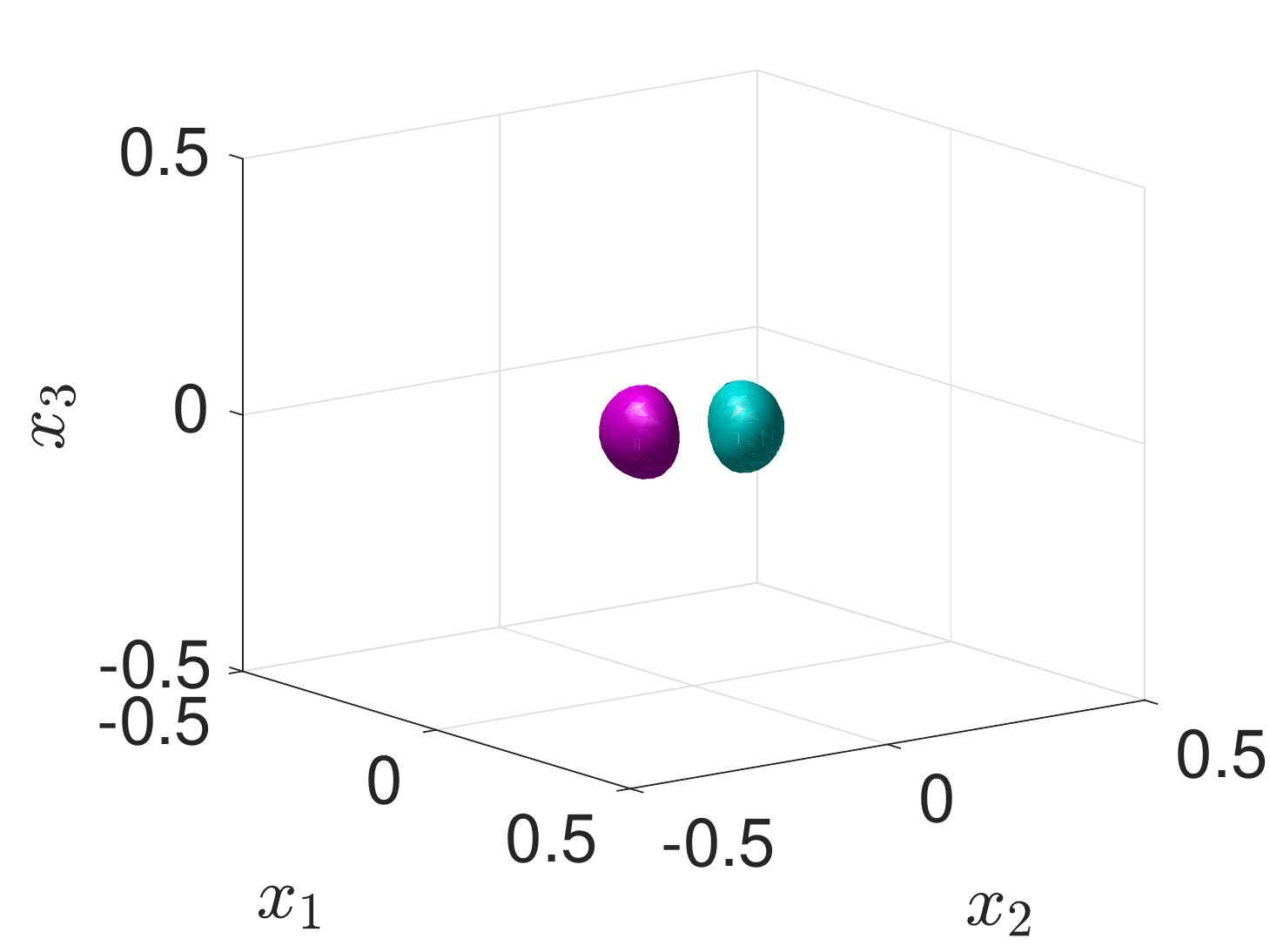}} \hfill
\caption{\label{fig:3} Iso-surface plots of the  exact  and the reconstructed vectorial source function of Example \ref{example2}, where the red color denotes the iso-surface level being $10$ and the green  color denotes iso-surface level being  $ -10$. (a) $J_1$,  (b) $J_2$,  (c) $J_3$, (d) $J_1^{10}$, (e) $J_2^{10}$, (f) $ J_3^{10}$.}
\end{figure}
Figure \ref{fig:3} presents the iso-surface plots of the exact source and the reconstruction with noise $2\%$ , which demonstrate clearly that our proposed method performance nicely.

\begin{example}\label{example3}
 In this example, we consider a discontinuous source function. For simplicity, the source function is given by
 \begin{equation*}
\bm J=\bm p f,
 \end{equation*}
where
\begin{equation*}
  \begin{aligned}
   &\bm p=\frac{1}{\sqrt{6}}\left(1, \sqrt{2}, \sqrt{3}\right),\\
   & \displaystyle f(x_1, x_2, x_3) =
   \begin{cases}
  &\displaystyle 1,          \quad \mathrm{if } \ (x_1+0.25)^2+x_2^2+x_3^2\leq 0.15^2,\medskip\\
  & \displaystyle\frac{1}{2}, \quad \mathrm{if}\  0.1\leq x_1\leq 0.4, -0.15\leq x_2\leq 0.15,-0.15\leq x_3\leq 0.15,\medskip\\
  & \displaystyle 0,   \quad \mathrm{elsewhere}.
   \end{cases}
  \end{aligned}
\end{equation*}
\end{example}

\begin{figure}
\hfill\subfigure[]{\includegraphics[width=0.32\textwidth]
                   {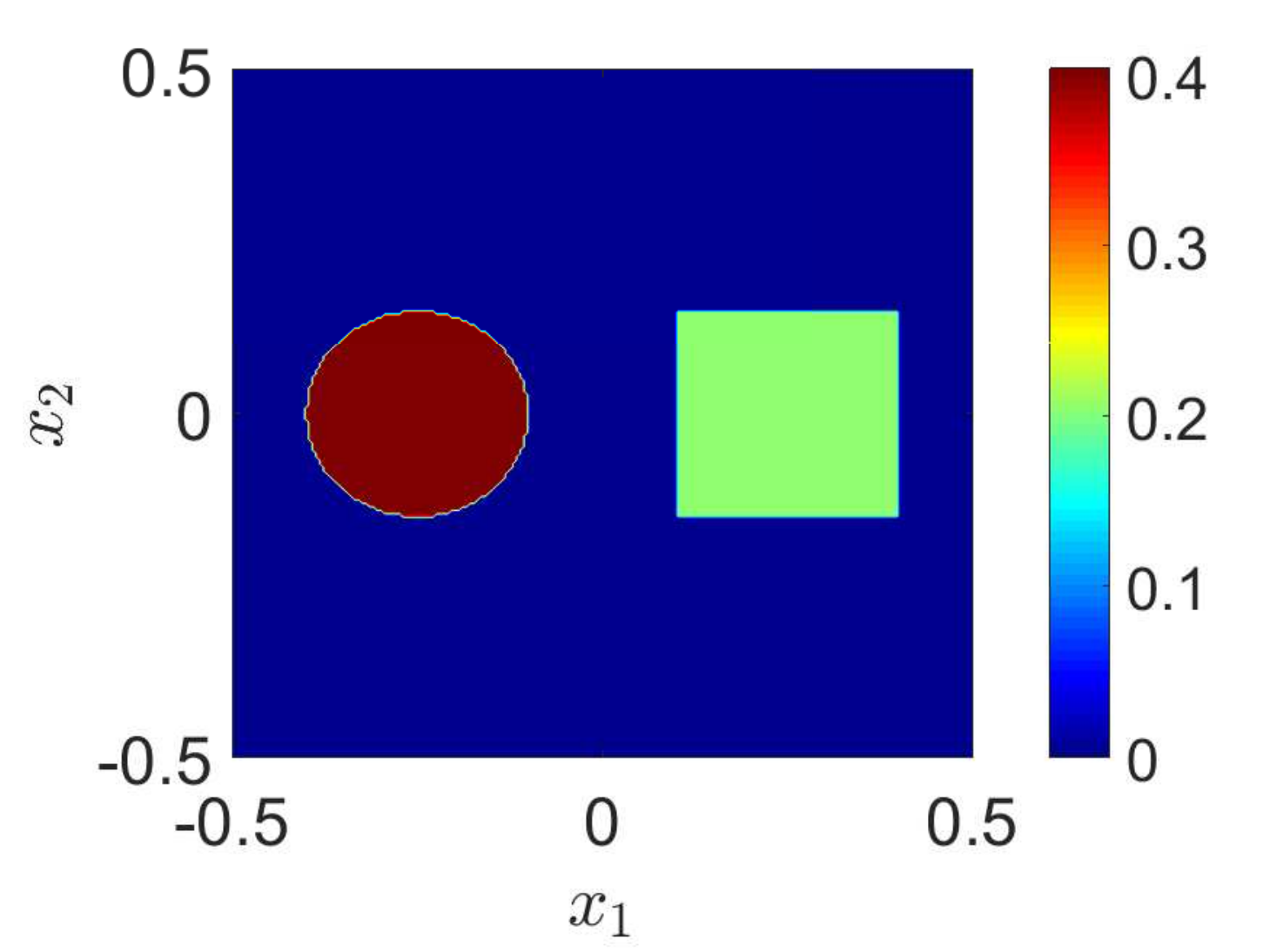}}\hfill
\hfill\subfigure[]{\includegraphics[width=0.32\textwidth]
                   {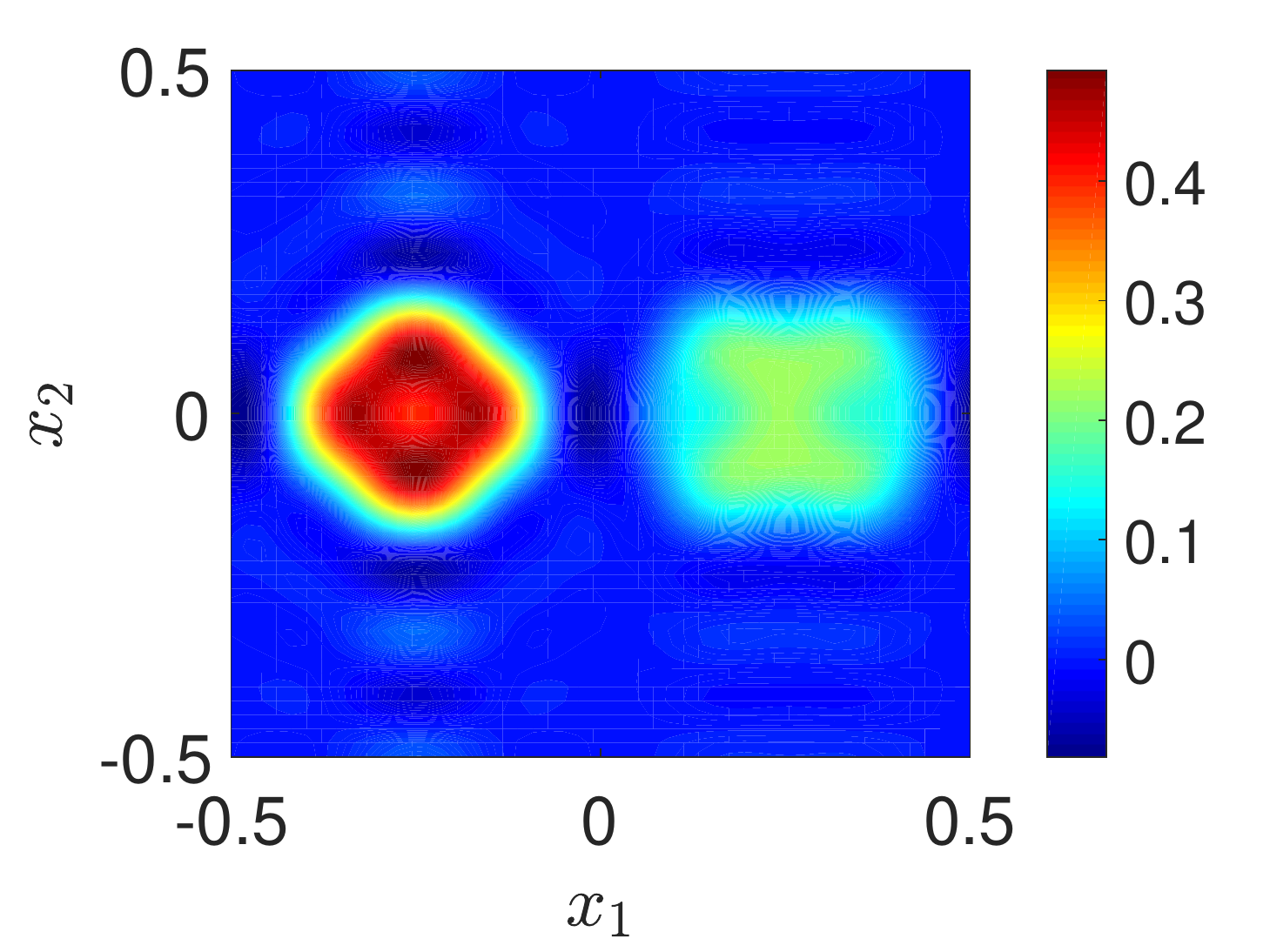}} \hfill
\hfill\subfigure[]{\includegraphics[width=0.32\textwidth]
                   {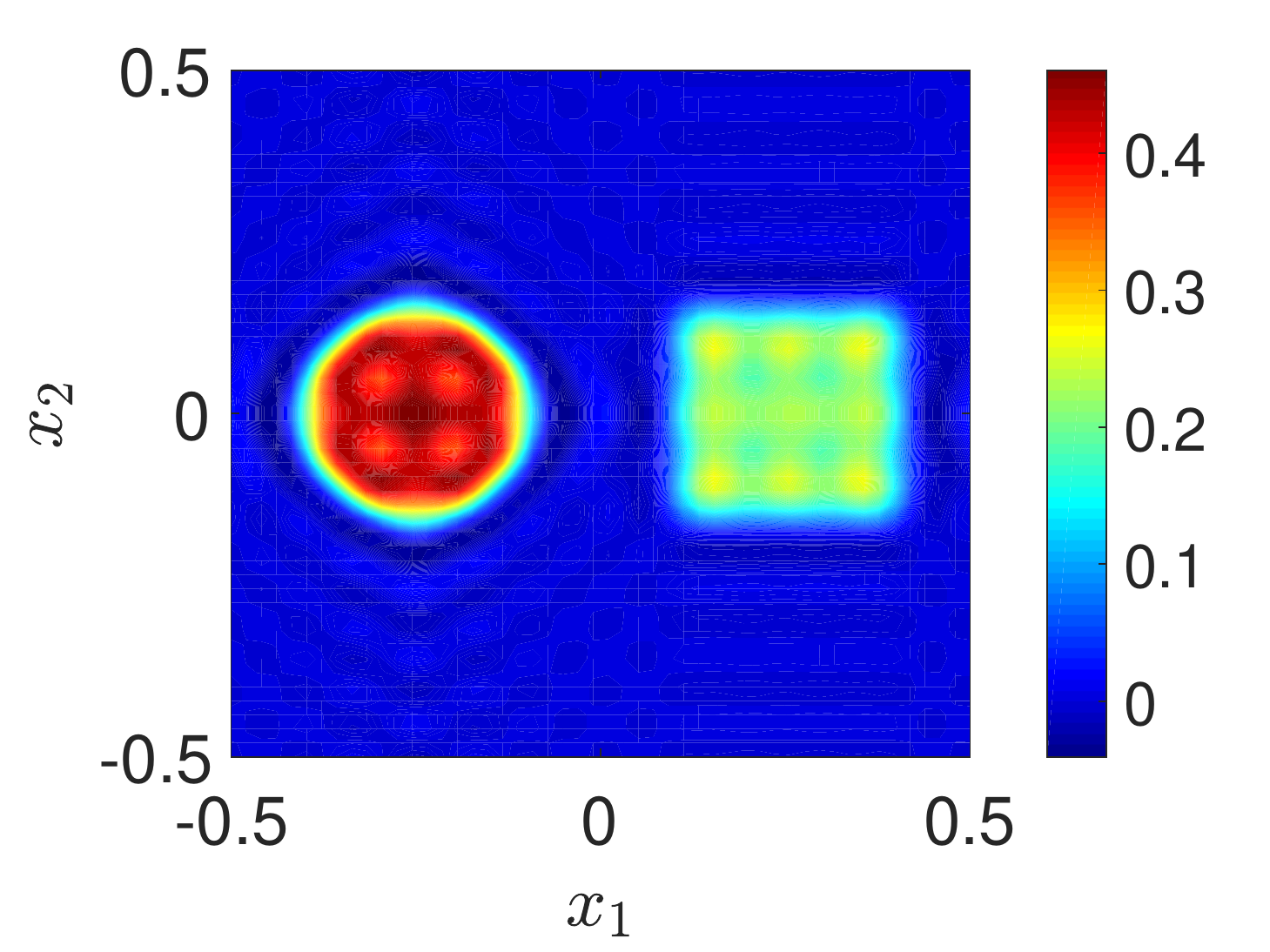}} \hfill\\
\hfill\subfigure[]{\includegraphics[width=0.32\textwidth]
                   {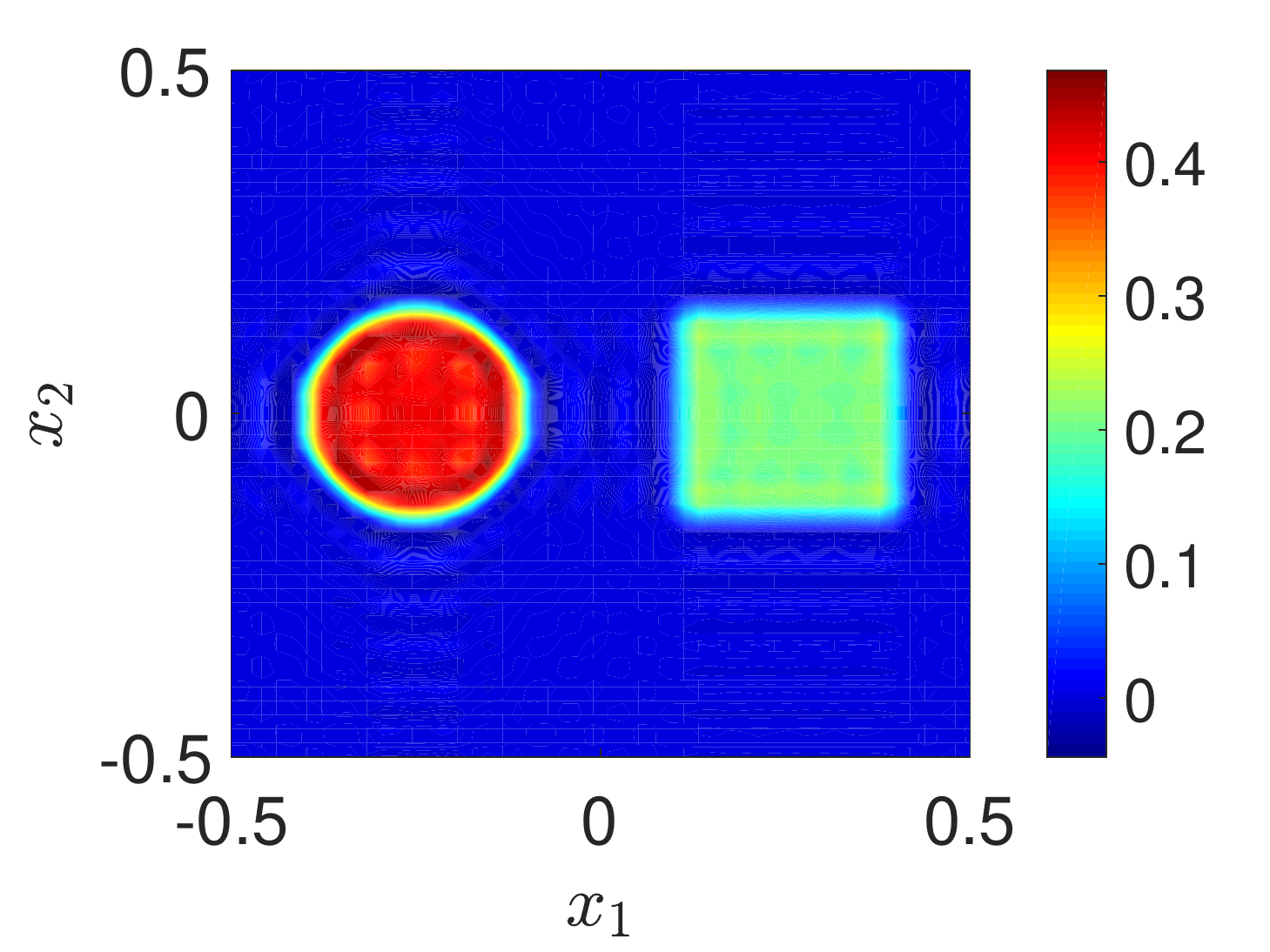}}\hfill
\hfill\subfigure[]{\includegraphics[width=0.32\textwidth]
                   {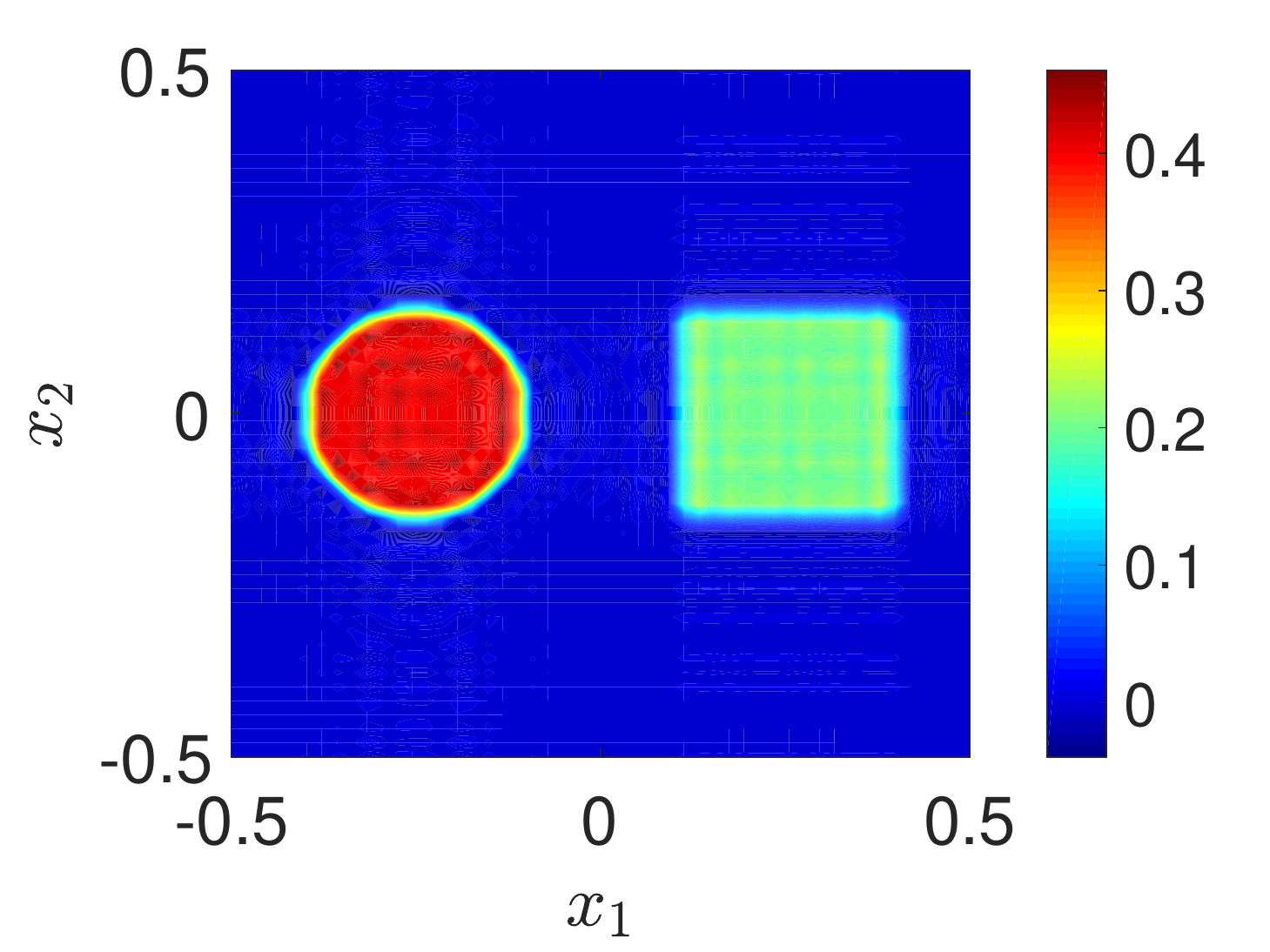}} \hfill
\hfill\subfigure[]{\includegraphics[width=0.32\textwidth]
                   {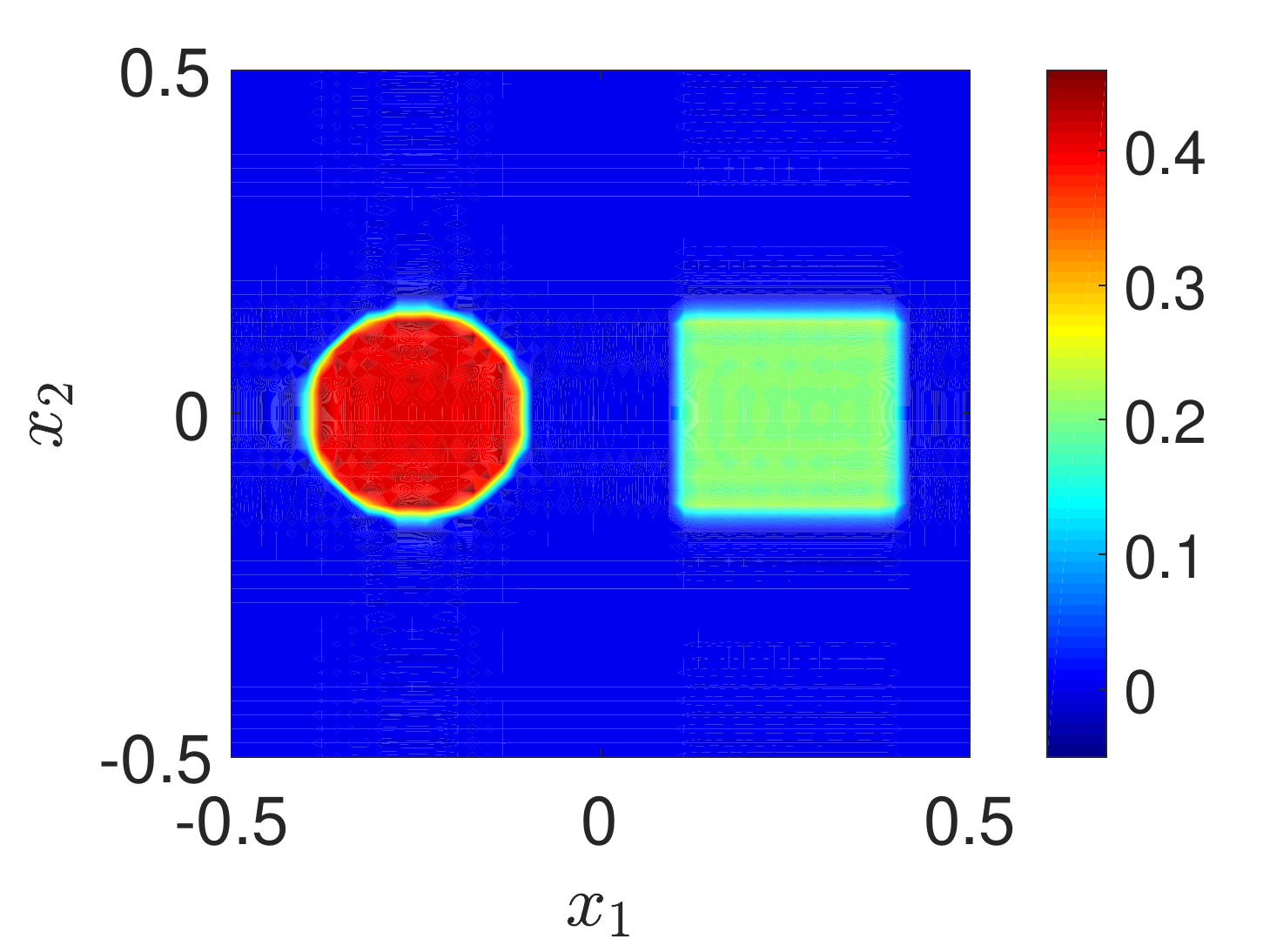}} \hfill
\caption{\label{fig:4} Contour plots of the  exact  and the reconstructed vector source function in Example \ref{example3} at the plane $x_3=0$. (a) exact $J_1$,  (b) $J_1^5$,  (c)  $J_1^{10}$, (d)  $J_1^{15}$, (e)  $J_1^{20}$, (f)  $J_1^{25}$.}
\end{figure}

\begin{figure}
\centering
\hfill\subfigure[]{\includegraphics[width=0.32\textwidth]
                   {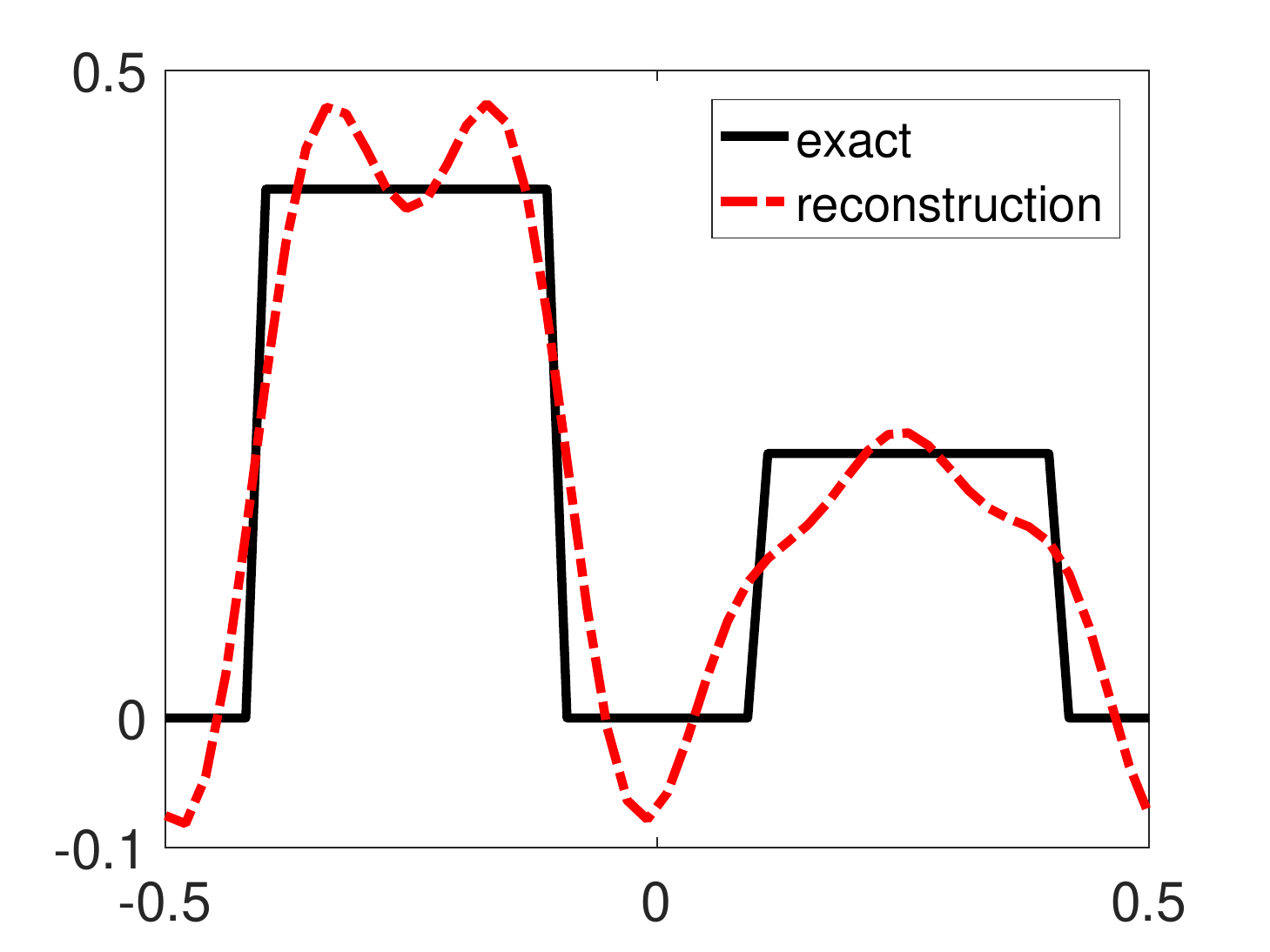}}\hfill
\hfill\subfigure[]{\includegraphics[width=0.32\textwidth]
                   {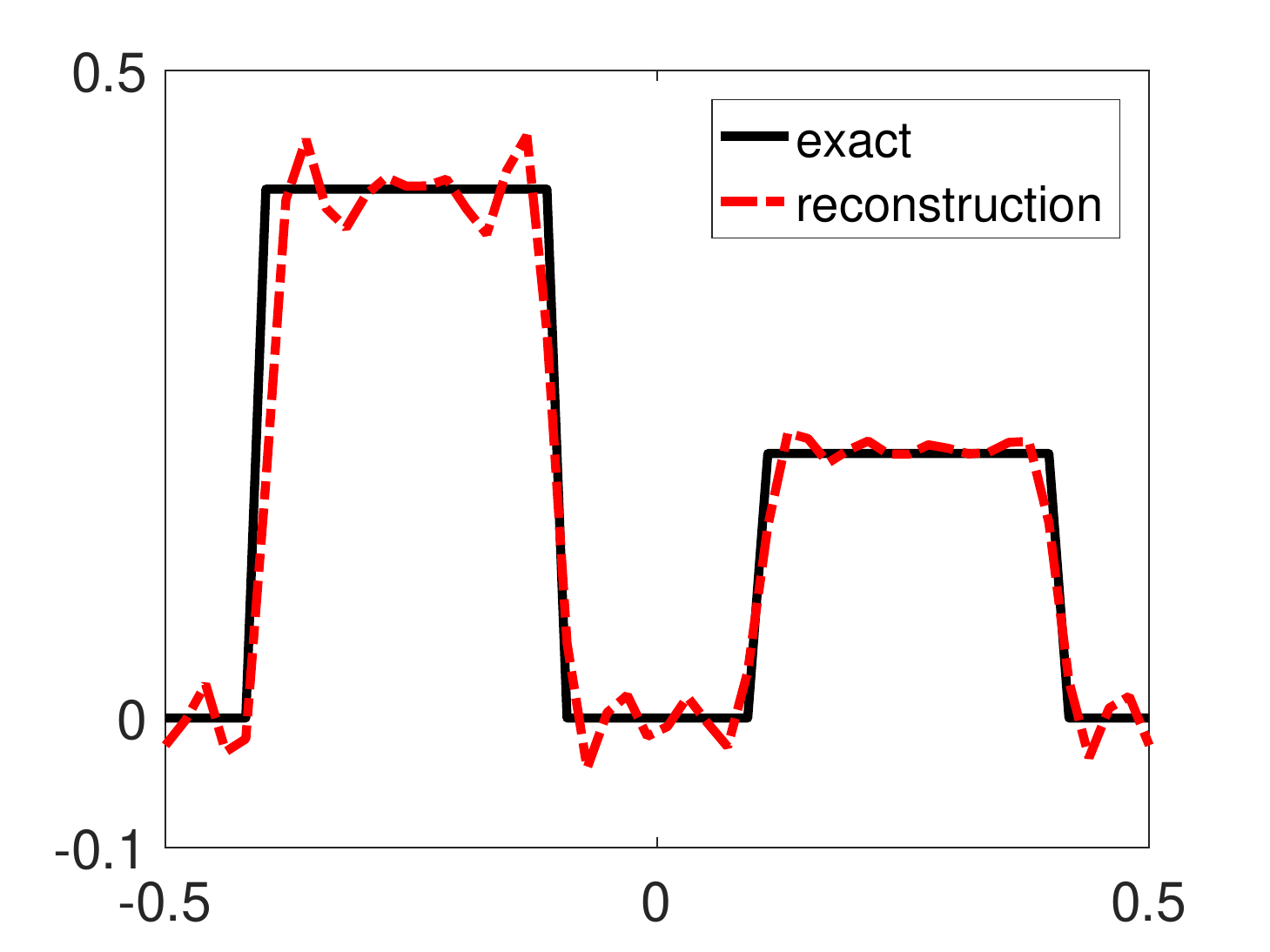}} \hfill
\hfill\subfigure[]{\includegraphics[width=0.32\textwidth]
                   {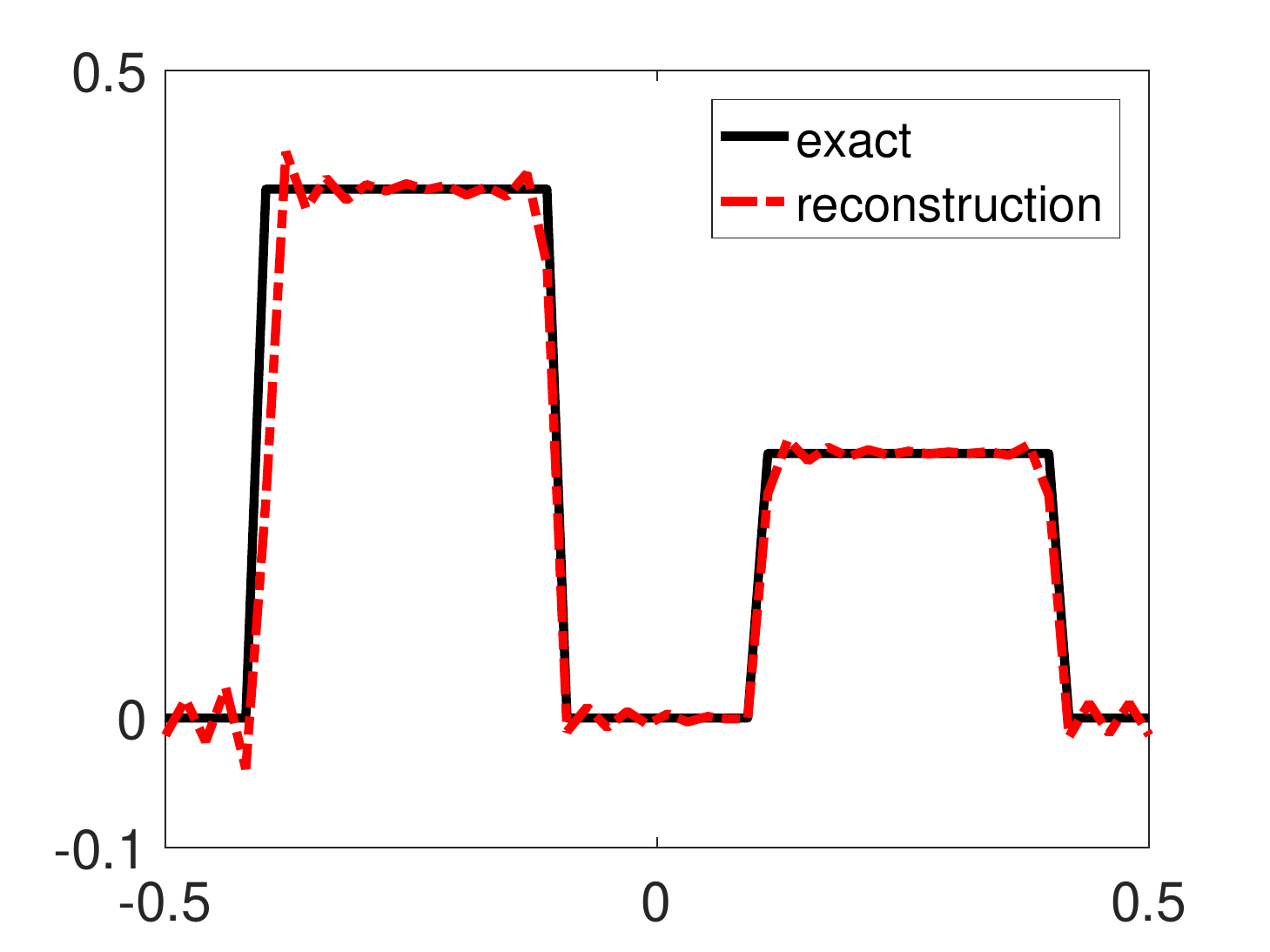}} \hfill\\
\caption{\label{fig:5} Gibbs phenomenon of the reconstructed source $J_1^N$ for different $N$ with $x_2=x_3=0$. (a) $N=5$, (b) $N=15$, (c) $N=25$.}
\end{figure}

Figure \ref{fig:4} shows the contour plots of the exact source and the reconstructions with different truncation order, $N=5, 10, 15, 20, 25$. It is clear that the resolution of the reconstructed results increase as the truncation order $N$ increases. Figure \ref{fig:5} shows the Gibbs phenomenon of the reconstructions over the line $x_2=x_3=0$ with the truncation order $N=5, 15 , 25$, respectively.

\section*{Acknowledgment}

The work of M. Song was supported by the NSFC grant under No. 11671113.
The work of Y. Guo was supported by the NSF grants of China under 11601107, 11671111 and 41474102.
The work of H. Liu was supported by the FRG and startup grants from Hong Kong Baptist University, Hong Kong RGC General Research Funds, 12302415 and 12302017.

\end{document}